\crefname{hypothesis}{Hypothesis}{Hypotheses}
\title{A multilinear Nystr\"{o}m algorithm for low-rank  approximation 
  of tensors in Tucker format
  }
\author{Alberto Bucci\thanks{Department of Mathematics, University of Pisa, Italy 
  (\email{alberto.bucci@phd.unipi.it}, \email{leonardo.robol@unipi.it}).}
\and Leonardo Robol\footnotemark[2]}
\newcommand{\Xkk}{X_{\otimes \widecheck{k}}}
\newcommand{\Ukk}{U_{\otimes \widecheck{k}}}
\newcommand{\Omegakk}{\Omega_{\otimes \widecheck{k}}}
\newcommand{\fl}{\mathrm{f\kern.2ptl}}
\newcommand{\OO}{\textrm{\normalfont \O}}
\newcommand{\mlappr}{\mathcal T}
\renewcommand{\hat}{\widehat}
\pgfplotsset{compat=1.18}
\begin{document}

\maketitle

\begin{abstract}
The Nystr\"om method offers an effective way 
to obtain low-rank approximation of SPD matrices, 
and has been recently extended and analyzed
to nonsymmetric matrices (leading to the 
generalized Nystr\"om method). It is a  
randomized, single-pass, streamable, 
cost-effective, and accurate alternative to the randomized SVD, and it facilitates the computation of several matrix low-rank factorizations.

In this paper, we take these advancements a step further by introducing a higher-order variant of Nystr\"om's methodology tailored to approximating low-rank tensors in the Tucker format: the multilinear Nystr\"om technique. 
We show that, by introducing appropriate small modifications
in the 
formulation of the higher-order method, strong stability
properties can be obtained. This algorithm retains the key attributes of the generalized Nystr\"om method, positioning it as a viable substitute for the randomized higher-order SVD algorithm.
\end{abstract}

\begin{keywords}
Low-rank approximation, Nystr\"om method, Randomized linear algebra, Tensors, Tucker decomposition
\end{keywords}

\begin{MSCcodes}
15A69, 
65F55 
\end{MSCcodes}

\section{Introduction}

Multilinear arrays, or tensors, offer a natural way to model higher-order 
structures, such as multivariate functions, models depending on several 
parameters, or high-dimensional PDEs. For this reason, they are often 
encountered in the numerical treatment of such applications. On one hand, 
this allows a seamless description of such structures. On the other 
hand, problems dealing with multi-dimensional arrays suffer the so-called \textit{curse of dimensionality} \cite{curse_of_dimensionality} that makes them computationally intractable due to memory requirements. More specifically, 
the storage requirements and the complexity of the numerical tools grow 
exponentially with the number of dimensions $d$. 

There have been several efforts to reduce the cost associated with 
dealing with tensors, and the most promising techniques leverage the use of 
low-rank properties \cite{grasedyck2013literature}. It turns out that giving an 
appropriate definition of tensor rank can be a challenging task. In contrast 
with the the $2$-dimensional case (i.e., when dealing with matrices),
where the definition of rank is essentially unique (and is linked with 
the dimensions of the subspaces spanned by rows and columns, which are the same), 
for tensors we have several alternatives.
In the matrix case, the rank and the closest 
rank $k$ approximant can be efficiently and stably computed 
with the singular value decomposition (SVD) \cite{svd}. 

The closest algebraic concept for $d > 2$ is 
finding the shortest decomposition in terms of outer products of $d$ vector 
(which we define as rank $1$ tensors). This is usually referred to as
canonical polyadic decomposition (CP or CPD) \cite{tensordecomp}, 
and  does not lead to favorable 
properties for numerical computations (for instance, the set of tensors with rank 
at most $k$ is not closed). An analogue of the SVD is not available in this 
setting. For this reason,  
it is convenient to look for other definitions of 
rank, such as the multilinear rank and the related Tucker 
decomposition (for which a higher order SVD is available \cite{hosvd})
or decompositions such as Tensor-Trains \cite{oseledets2011tensor} or 
Hierarchical Tucker \cite{grasedyck2010hierarchical}. 

In this work, we focus on the Tucker decomposition \cite{tucker} of low-rank tensors, 
and we address the problem of finding a 
randomized algorithm for the
low-rank approximation 
in this format exploiting only tensor mode-$j$ products, or contractions.

It has been recently shown that randomization can be a powerful and 
highly successful tool in numerical linear algebra \cite{kressner2023analysis,randomization_advantajes,nakatsukasa2021fast}, 
especially for large-scale problems where parallelization and limited access to data are needed. The main example of this situation is to find a near-optimal low-rank approximation to a matrix $A\in\mathbb{R}^{m\times n}$.

The underlying idea of randomized low-rank approximation 
algorithms is that the rows of a numerically low-rank matrix are almost linearly dependent and they can be embedded into a low-dimensional space without substantially altering their geometric properties. 
In particular, it has been observed that using random sketch matrices to construct a dimensionality reduction map (DRM) is often an efficient and nonadaptive way to achieve this.

In this regard, consider Algorithm~\ref{alg:HMT}, 
described by Halko, Martinsson, and Tropp in \cite{halko2011finding}, 
to determine an orthogonal matrix $Q\in \mathbb{R}^{m\times (r+\ell)}$ such that $Q Q^T A\approx A$ is a rank $r+\ell$ factorization of $A$.
\begin{algorithm}
\caption{Randomized rangefinder (HMT)} \label{algo:rangefinder}
\vspace{2mm}
\textbf{Input} \hspace{6mm} $A\in \mathbb{R}^{m\times n}$, rank $r\leq \min\{m, n\}$, oversampling parameter $\ell\geq 2$.\\
\textbf{Output} \hspace{3mm} $Q\in \mathbb{R}^{m\times (r+\ell)}$, with $Q^TQ = I$ and such that $Q Q^T A\approx A$.\\
\\
\phantom{\textbf{compute}} \hspace{1.5mm} Draw a random matrix $X\in \mathbb{R}^{n\times (r+\ell)}$;\\
\phantom{\textbf{compute}}  \hspace{1.5mm} Compute $AX$ (sketching);\\
\phantom{\textbf{compute}}  \hspace{1.5mm} Compute $Q$, orthogonal factor of an economy-size $QR$ of $AX$.

\vspace{2mm}
\label{alg:HMT}
\end{algorithm}

Despite its simplicity, this strategy is efficient and comes with attractive theoretical guarantees: it provides a near-optimal low-rank approximation to $A$.
When the matrix $X$ is chosen with
independent Gaussian distributed entries (with zero
mean and unit variance), one can prove the following upper 
bound for the expected value of the approximation error:
\[
\mathbb E \lVert {A - QQ^T A } \rVert_F \leq
\sqrt{
  1 + \frac{r}{\ell - 1}
} \cdot  \sqrt{
  \textstyle \sum_{j > r} \sigma_j^2(A)}.
\]
Since $\sqrt{\sum_{j > r} \sigma_j^2(A)}$ 
is the distance of $A$ from the 
set of rank $r$ matrices with respect to the Frobenius norm, the 
result is quasi-optimal up to a moderate constant. To better 
characterize the behavior of the randomized algorithm, 
one can describe the tail of the distribution. Under mild assumptions 
on $\ell$ (the oversampling parameter), we have:
\[
  \mathbb P \left\{ 
    \lVert {A - QQ^T A } \rVert_F > 
    \left(
      1 + \ell \sqrt{\frac{3r}{\ell+1}}
      + e \sqrt{2\ell (r+\ell) \log \ell} 
    \right)
    \sqrt{
  \textstyle \sum_{j > r} \sigma_j^2(A)}
\right\} \leq 3 \ell^{-\ell}.
\]
This and other results are discussed in \cite{halko2011finding} and the references therein.\footnote{To obtain the tail bound for the 
approximation error in the Frobenius norm, we used 
$t = \ell$ and $u = \sqrt{2 \ell \log \ell}$ in 
Theorem~10.7 of \cite{halko2011finding}.}

While very effective, the HMT method requires orthogonalization steps, which in specific situations may be relatively expensive, or not available because of 
the use of particular architectures (e.g., GPUs) \cite{nakatsukasa2020fast}. 
In addition, the method requires two passes, where the result of the first 
matrix-vector multiplication needs to be processed and then used in another 
matrix-vector multiplication. To reduce communication, single-pass 
algorithms are more attractive in several environments and have driven 
the interest in the design of ``streamable'' algorithms 
\cite{kressner2022streaming}. To mitigate this issue, one may rely on 
the Nystr\"om method; originally developed for SPD matrices, it has been recently extended to general matrices in \cite{tropp_GN}. In \cite{nakatsukasa2020fast}, where the method is called generalized Nystr\"om, or GN a few tricks to improve stability are proposed and a detailed 
error analysis of the method is given. 

The scheme of the GN method is the following: first, two DRM $X \in \mathbb{R}^{n\times r}$ and $Y\in \mathbb{R}^{m\times (r+\ell)}$, for some $\ell\geq 1$ are generated, then the low-rank approximation $\hat{A}$ is obtained by the formula
\begin{equation} \label{nystrom}
    \hat{A} = AX (Y^T A X)^{\dagger} Y^T A.
\end{equation}
The analysis in \cite{nakatsukasa2020fast} shows that, as for HMT, the quality of the approximation provided by GN is near-optimal and that the method can be implemented in a numerically stable fashion despite the presence of a (potentially)
ill-conditioned pseudoinverse.

In scenarios involving sparsity or structured data, 
one may wish to have factors with the same structure, a feature that GN might not have. In this context, 
alternative approaches such as the CUR factorization proposed in \cite{CUR_decomposition}, may be 
more appealing.

As regards tensors, several different randomized approaches for the Tucker approximation have been proposed in the literature \cite{randhosvdsurvey, power_tucker, other_tucker, minster_li_ballard, Saibaba_Minster_Arvind,  Another_Tucker}.
In \cite{caiafa2010generalizing,Tensor_hoid, oseledets2008tucker, HOID} for instance, the authors propose CUR-type algorithms for the decomposition of tensors. 

 Another straightforward approach is to replace every truncated SVD inside the HOSVD \cite{hosvd} or the STHOSVD \cite{sthosvd} algorithms, with the HMT method \cite{higherorderHMT}.
However, such methods and the variants mentioned above are not streamable
and may require (large and expensive) QR decompositions. 

In this work, we present an extension of GN to tensors, 
that recovers a Tucker decomposition; being a higher-order generalization 
of the Nystr\"om method, we refer to the new version as 
\textit{multilinear Nystr\"om} (MLN).

The resulting method has near-optimal approximation quality, 
and delivers results of comparable accuracy to other competing methods; the
computational cost is near-optimal for dense tensors, with small
hidden constants and, similarly to GN, it can be implemented in a numerically stable
fashion. The method avoids QR factorizations of large matrices, and only 
uses ``advanced'' linear algebra techniques (i.e., 
operations that are not matrix-vector products or matrix-matrix products) on small matrices. Hence, it is amenable to the implementation on 
various architectures, with minimal requirements. 

Theoretical results and numerical experiments show that the algorithm outperforms state-of-the-art methods both in terms of memory requirements, computational cost, and number of accesses to the original tensor data.

\section{Preliminary concepts and notation}

In this section, we introduce a few concepts and notations used
throughout the paper. A \textit{tensor} 
$\mathcal{A}\in\mathbb{R}^{I_1\times \dots \times I_d}$ is a $d$ dimensional array with entries
\[
a_{i_1i_2\dots i_d}, \quad 1\leq i_k \leq I_k, \quad  k = 1, \dots, d.
\]
The symbols used for transposition, Moore-Penrose inverse and 
Kronecker products of matrices are $T$, $\dagger$ and $\otimes$ respectively.
We use $\|\cdot\|_F$ for the Frobenius norm and $\|\cdot\|_2$ for
the spectral norm. 

We will repeatedly use the unfolding operation, which consists 
in reshaping tensors into matrices. The operation is sometimes 
called matricization or flattening. More specifically, 
when $\mathcal{A}$ is a tensor, its \textit{mode}-$k$ \textit{matricization} is denoted by $\mathcal{A}_{k}\in \mathbb{R}^{I_k \times \prod_{j\neq k} I_j}$ and satisfies:
\[
{(\mathcal{A}_{k})}_{i_k, j} = \mathcal{A}_{i_1,\dots, i_d},
\]
where
\[
j = 1 + \sum_{t=1, t \neq k}^d (i_t-1)J_t, \quad J_t = \prod_{s=1, s\neq k}^{t-1} I_s.
\]\\
The \textit{mode}-$k$ \textit{product} of a tensor $\mathcal{A}\in\mathbb{R}^{I_1\times \dots \times I_d}$ and a matrix $X \in \mathbb{R}^{J\times I_k}$ is denoted by $\mathcal{A}\times_k X$ and is such that

\[ 
 (\mathcal{A}\times_k X)_{i_1\dots i_{k-1}ji_{k+1}\dots i_d} = \sum_{s=1}^{I_k} \mathcal{A}_{i_1\dots i_{k-1}si_{k+1}\dots i_d}X_{js}.
\]
The mode-$k$ product along all dimensions (i.e., for $k = 1, \ldots, d$)
can be effectively expressed by leveraging a mix of 
matricizations and Kronecker products as follows:
\[
(\mathcal{A}\times_1 X_1 \times \dots \times X_d)_{k} = X_k \mathcal{A}_{k} (X_d \otimes \dots\otimes X_{k+1} \otimes X_{k-1} \otimes \dots \otimes X_1)^T.
\]
The focus of this work is on Tucker decompositions, which are
closely related with the concept of multilinear rank. Given a 
$d$-dimensional tensor, its multilinear rank is a tuple 
denoted by $rk^{ML}(\mathcal{A}) = (r_1, \dots, r_d)$, where $r_k$
is the matrix rank of $\mathcal A_k$, its mode-$k$ matricization \cite{hosvd}. 

When we need to compare multilinear ranks of different tensors we say that 
$rk^{ML}(\mathcal{A}) \leq rk^{ML}(\mathcal{B})$ if the multilinear rank of $\mathcal A$ is component-wise smaller than the one of $\mathcal B$. 

Several tensors of interest have some low-rank properties \cite{grasedyck2013literature}, which often 
only hold in an approximate sense (i.e., they are not low-rank, but 
they are close to a low-rank tensor in an appropriate metric). We 
introduce the class of $\epsilon$-approximable tensors, which makes this idea more precise. 
\begin{definition}\label{def:mlappr}
  Given a tuple $(r_1, \ldots, r_d)$ and $\epsilon > 0$, we define 
  $\mlappr_\epsilon(r_1, \ldots, r_d)$ as the set 
  \[
    \mlappr_{\epsilon}(r_1, \ldots, r_d) := \{ 
      \mathcal A \in \mathbb R^{n_1 \times \ldots \times n_d} \ | \ 
      \exists \mathcal \| \mathcal E \|_F \leq \epsilon \text{ with }
      rk^{ML}(\mathcal A + \mathcal E) \leq (r_1, \ldots, r_d)
    \}.
  \]
\end{definition}

When multiplying a tensor with mode-$k$ products along different 
modes (for instance for $k = 1, \ldots, d$), the order of the 
multiplication is irrelevant, since the operations commute. 
Hence, we often shorten formulas involving this kind 
of product as follows:
\begin{align*}
 \mathcal{A} \times_{i=1}^d X_i &= \mathcal{A}\times_1 X_1 \dots \times_d X_d, \\ 
X_{\otimes \widecheck{k}} &=X_d \otimes \dots\otimes X_{k+1} \otimes X_{k-1} \otimes \dots \otimes X_1.
\end{align*}

We also remark that the following properties hold, and are easily 
verified using the formulations of mode-$k$ products by means of 
matricization:
\[
\mathcal{A}\times_ k X_i \times_k X_j = \mathcal{A} \times_k X_j X_i, \qquad 
(\mathcal{A} \times_{i=1}^d X_i)_{k}=X_k \mathcal{A}_{k} \Xkk^T.
\]

We denote by $Q = \mathrm{orth}(X)$ the $Q$ factor of an economy
size QR factorization of a matrix $X$ with more rows than columns. 


\section{Randomized matrix low-rank approximation}
\label{sec:randomized}

The analysis of the multilinear Nystr\"om method (MLN) is based on results 
from the matrix case, which are briefly reviewed in this section. More 
specifically, we consider the approximant obtained by the HMT scheme 
for finding an orthogonal basis of the column span (as in \cite{halko2011finding})
and the GN scheme from \cite{nakatsukasa2020fast}.

Given a matrix $A$ of size $m\times n$ the approximants obtained by HMT and GN methods are given respectively by
\[
    \hat{A}_{HMT} = Q (Q^T A)\quad \text{and} \quad \hat{A}_{GN} = AX (Y^T A X)^{\dagger} Y^T A,
\]
where $Q = \mathrm{orth}(AX)$ and $X\in \mathbb{R}^{n\times r}$, $Y \in \mathbb{R}^{m\times (r+\ell)}$ are two DRM matrices.

We denote by $E_{HMT}$ the error of the approximation in Frobenius norm of the HMT approximant and with $E_{GN}$ that of GN; we have the following
upper bounds
\begin{align} \label{eq:HMT_error_bound}
E_{HMT} &\leq \|\widetilde{\Sigma}\|_F\sqrt{1 + \|\widetilde{V}_{\perp}^{T}X\|_2^2\|(\widetilde{V}^T X)^\dagger\|_2^2},\quad \\ \label{eq:GN_error_bound}
\quad E_{GN} &\leq E_{HMT}\sqrt{1 + \|Q_{\perp}^T Y\|_2^2\| (Q^T Y)^\dagger\|_2^2 },
\end{align}
where, for any $\hat{r}<r$, $\widetilde{\Sigma}$ is the diagonal term in the SVD of $A$ with a $0$ in place of the first $\hat{r}$ singular values and $\widetilde{V}$ is the orthogonal matrix with the first $\hat{r}$ right singular vectors of $A$. 

The term $\|\widetilde \Sigma\|_F$ is the optimal error that would be obtained 
by a truncated SVD; hence, it is clear that it is important to choose the 
DRM in a way that makes the other terms as small as possible (with high 
probability). At the same time, we wish to maintain the cost of taking
the matrix-vector products small, so it makes sense to use DRM drawn from 
a set of structured matrices which have fast matrix-vector products
routines available.

A few choices of random samplings that allow for fast multiplications 
arise from subsampling trigonometric transforms. Examples include the
Subsampled Randomized Hadamard Transform (denoted by SRHT) \cite{boutsidis2013improved, tropp2011improved}, 
and
the Subsampled Randomized Fourier Transform (SRFT) \cite{rokhlin2008fast}. These approaches reduce the 
cost of forming $AX$ to $\mathcal{O}(mn \log n)$, where $n$ is the number of 
columns of $X$, and $m$ the number of rows of $A$. The theory for these subsampled transforms can be more complex than the one for more ``classical'' choices, such as Gaussian
matrices; the latter are deeply understood and have sharp error bounds available (see \cite{randomization_advantajes} and the references therein).

The use of GN has a few advantages with respect to the HMT scheme: 
it avoids orthogonalizations and can be used as a single-pass approximation
method.

However, without a proper implementation, the stability of GN can be cause for concern. The pseudocode in Algorithm~\ref{alg:GN}
reports the implementation suggested in \cite{nakatsukasa2020fast}. 

\begin{algorithm}
\caption{Generalized Nystr\"om (GN)} 
\vspace{2mm}
\textbf{Input} \hspace{6mm} $A\in \mathbb{R}^{m\times n}$, rank $r\leq \min\{m, n\}$, oversampling parameter $\ell\geq 1$.\\
\textbf{Output} \hspace{3mm} Low-rank approximant $\hat{A}$ of $A$.\\
\\
\phantom{\textbf{compute}} \hspace{1.5mm} Draw two random matrices $X\in \mathbb{R}^{n\times r}$ and $Y\in \mathbb{R}^{m\times (r+\ell)}$;\\
\phantom{\textbf{compute}}  \hspace{1.5mm} Compute $AX$, $Y^TA$; \\ 
\phantom{\textbf{compute}}  \hspace{1.5mm} Compute an economy-size QR factorization $Y^TAX = ZR$;\\
\phantom{\textbf{compute}}  \hspace{1.5mm} Compute $\hat{A} = ((AX)R^{-1})(Z^T(Y^TA))$.
\vspace{2mm}
\label{alg:GN}
\end{algorithm}

In practice, a slight oversample parameter $\ell$ makes this implementation stable, but no theoretical assessments have been conducted. While stability cannot be established
for \eqref{nystrom} as is, there is an inexpensive modification that guarantees stability:
\begin{equation} \label{stabilized_nystrom}
\hat{A} = AX (Y^TAX)^{\dagger}_\epsilon Y^TA,
\end{equation}
which is the stabilized generalized Nystr\"om (SGN) method.

Here $(Y^TAX)_\epsilon$ denotes the $\epsilon$-pseudoinverse, that is if 

\[Y^TA X =\begin{bmatrix}
    U_1 & U_2
\end{bmatrix}\begin{bmatrix}
    \Sigma_1 & \\
    & \Sigma_2
\end{bmatrix}\begin{bmatrix}
    V_1 & V_2
\end{bmatrix}^T\]
is the SVD, where $\Sigma_1$ contains singular values larger than $\epsilon$, then $(Y^TAX)^\dagger_\epsilon = V_1 \Sigma_1^{-1} U_1^T$. 
In the following, $\epsilon$ will be chosen as
a modest multiple of the unit roundoff $u$ times $\|A\|_F$, that is $\epsilon = \mathcal{O}(u\|A\|_F)$.

Different strategies to implement SGN in a stable manner can be found in \cite{nakatsukasa2020fast}.

\section{Multilinear Nystr\"om}\label{sec: mlnystrom}

This section is devoted to extending GN to tensors, and, 
in particular, lay the ground for determining if the stability analysis and the 
related guarantees that are explored in \cite{nakatsukasa2020fast}
for matrices carry over to the tensor setting. 

To simplify the analysis, it is convenient to rewrite the classical 
Nystr\"om approximant \eqref{nystrom}
in the matrix setting in a slightly different
way. Leveraging the properties of the Moore-Penrose pseudoinverse we have 
the following identity:
\begin{equation*} 
 AX(Y^TAX)^\dagger Y^TA = 
   A X ( Y^T A X)^{\dagger}Y^T A X( Y^T A X)^{\dagger}Y^TA.
\end{equation*}
The above reformulation identifies a structure in the approximant, which is formed 
by a small core matrix $Y^TAX$ and two matrices of the form $AX(Y^TAX)^{\dagger}$ and $(Y^TAX)^{\dagger}Y^TA = (A^TY(X^TA^TY)^\dagger)^T$ which invert the sketching procedure. Notice that the matrices $\mathcal{P}_1:=AX(Y^TAX)^{\dagger}Y^T$ and $\mathcal{P}_2:=A^TY(X^TA^TY)^\dagger X^T$ are approximate 
oblique projections on the column space of $A$ and $A^T$ respectively.

This formulation can be replicated in the tensor setting. Given a tensor $\mathcal{A}\in\mathbb{R}^{n_1\times \dots \times n_d}$ and sketch matrices $X_i\in \mathbb{R}^{n_i\times r_i} \quad i= 1, \dots, d$ \quad we form a core tensor $\mathcal{A}\times_{i=1}^d X_i^T$, by sketching $\mathcal{A}$ in all the modes and we construct the matrices $(\mathcal{A}\times_{i\neq k} X_i^T)_{k} (\mathcal{A}\times_{i=1}^d X_i^T )_{k}^{\dagger}$ that give an approximate basis of the column span of the 
mode-$k$ matricization. 

This leads to a randomized algorithm for finding a low-rank representation of a tensor in Tucker format. Formally, we define the approximant $\widehat{\mathcal{A}}$ of $\mathcal{A}$ by
\begin{equation} \label{mlnystrom}
\begin{split}
  \widehat{\mathcal{A}} &= (\mathcal{A}\times_{i=1}^d X_i^T)\times_{k=1}^d \left((\mathcal{A}\times_{i\neq k} X_i^T)_{k} (\mathcal{A}\times_{i=1}^d X_i^T )_{k}^{\dagger}\right)\\ 
  &= (\mathcal{A}\times_{i=1}^d X_i^T)\times_{k=1}^d \left(
    \mathcal A_k \Xkk 
    (X_k^T \mathcal A_k \Xkk)^\dagger
  \right).
  \end{split}
\end{equation}
Note that when $d=2$ the matrix $\widehat{A}_1$, flattening along the first index of \eqref{mlnystrom}, is indeed the reformulation of the generalized Nystr\"om approximant proposed 
above. In fact, we have
\begin{equation}\label{2mode_example}
\widehat{A}_{1}=AX_2(X_1^T A X_2)^{\dagger}(X_1^T A X_2)(X_1^T A X_2)^{\dagger}X_1^TA=AX_2(X_1^T A X_2)^{\dagger}X_1^T A.
\end{equation}
It is convenient to define, for each $k=1, \dots, d$ the projection matrix  
\begin{equation} \label{projection}
\mathcal{P}_k:=\left((\mathcal{A}\times_{i\neq k} X_i^T)_{k} (\mathcal{A}\times_{i=1}^d X_i^T )_{k}^{\dagger}\right)X_k^T= \mathcal{A}_k\Xkk(X_k^T\mathcal{A}_k\Xkk)^\dagger X_k^T.
\end{equation}
The matrix $\mathcal P_k$ is an oblique projection onto the column space of $\mathcal{A}_k$ and allows us to rewrite \eqref{mlnystrom} in the compact form
\[
\hat{\mathcal{A}} = \mathcal{A} \times_{k=1}^d \mathcal{P}_k.
\]
Note that the formula we mentioned above is not new, and was already present in Caiafa and Cichocki \cite{caiafa2014stable}. 
This paper was published before the formulation of GN 
in \cite{tropp_GN, nakatsukasa2020fast}. To the best of our knowledge, 
the connection between these two works has not been emphasized so far.

An important feature of \eqref{mlnystrom} is that when the tensor $\mathcal{A}$ is expressed in Tucker format, that is $\mathcal{A} = \mathcal{C}\times_{k=1}^d U_k$ with small core tensor $\mathcal{C}$, the cost of computing the different mode products drops dramatically: we can first compute the products $\Psi_k = X_k^T U_k$ and then the mode products between $\mathcal{C}$ and the $\Psi_k$. 
This suggests that \eqref{mlnystrom} can be used as an effective method for recompression of tensors in Tucker format. 

For the matrix case ($d = 2$) in the formulation 
of \eqref{stabilized_nystrom}, the key property that allows to prove 
some form of stability under floating point inaccuracies is to have unbalanced 
dimensions in $X$ and $Y$; the fact that $Y$ has $r + \ell$ rows and 
$X$ only $r$ plays the same role of
the oversampling in Algorithm~\ref{alg:HMT}, 
and allows to stabilize
the least square solution. This property is lost when we factor out the projections as in \eqref{mlnystrom}. Hence, the 
reformulation of \eqref{mlnystrom} has two opposite effects on the
design of the low-rank approximation scheme:
\begin{itemize}
    \item On one hand, it makes extending the approach to $d > 2$ much 
      easier, because it suffices to define the oblique projectors 
      $\mathcal P_k$ and apply them on all modes from $k = 1, \ldots, d$.
    \item On the other hand, it makes a stability analysis more 
      difficult (or impossible without further modifications), because any 
      immediate bound will depend on the norms of $\mathcal P_k$, which
      are hard to control.
\end{itemize}

We have already discussed how the reformulation allows for an easy extension
for a generic $d$. In the next section, we will show that we can introduce 
further degrees of freedom in the choice of samplings, and this will greatly help in slightly modifying the method to make it stable. The 
core idea is that, instead of fixing only $d$ sampling matrices 
$X_1, \ldots, X_d$, we can choose $2d$ by introducing additional
samplings $Y_1, \ldots, Y_d$. This will allow us to reintroduce the unbalanced 
dimension (and the stabilized least square solvers) into the picture. 
\subsection{Introducing more sketching matrices}

This section introduces a small variant of the sketching procedure
described in \eqref{mlnystrom}, that allows us to select more than 
$d$ sketching matrices ($2d$ instead of $d$). The aim of this 
generalization is to design an approximation method with 
better stability properties. 

To make it easier to follow the discussion, we start by showing
how this generalization can be formulated in the matrix case. 

Recall that, according to the previous discussion, the approximation provided by GN can be written as
\begin{equation*}
A X ( Y^T A X)^{\dagger}Y^T A X( Y^T A X)^{\dagger}Y^TA.  
\end{equation*}

Instead of using the same $X$ and $Y$ for the two projections $A X ( Y^T A X)^{\dagger}Y^T$ and $X( Y^T A X)^{\dagger}Y^TA$, we may achieve more generality by using different sketching matrices ($X_1\in\mathbb{R}^{n_1\times r_1}$ and $Y_1\in \mathbb{R}^{m_1\times (r_1+\ell_1)}$ for the first projection and $X_2\in\mathbb{R}^{n_2\times r_2}$ and $Y_2\in \mathbb{R}^{m_2\times (r_2+\ell_2)}$ for the second), still ensuring that whenever $A$ is of low-rank, an exact representation is retrieved (at least theoretically, if no floating point errors are 
considered). The resulting approximant would be as follows:
\begin{equation}\label{extended_nystrom}
\hat{A} = AX_1 (Y_1^T A X_1)^{\dagger}Y_1^T A Y_2(X_2^T A Y_2)^{\dagger} X_2^T A.
\end{equation}

Note that in the second projection, not only have we substituted $X$ and $Y$ with $X_2$ and $Y_2$, but we have also swapped them for reasons of symmetry.

In addition, by looking at $\widehat{A}$ as a 2-dimensional tensor, this approximation can be obtained by applying oblique 
projectors $\mathcal P_1$ and $\mathcal P_2$ along the modes $1$ and $2$, respectively, 
with the projectors $\mathcal P_1 = AX_1 (Y_1^T A X_1)^{\dagger}Y_1^T$ and $\mathcal P_2 = A^TX_2 (Y_2^T A^T X_2)^{\dagger}Y_2^T$. 
We use the same notation $\mathcal P_k$ used 
in \eqref{projection} for these 
more general projectors, since there is no risk 
of confusion. 

Going back to the notation of mode-$j$ products, we may rewrite the 
approximation as follows:
\[
\hat{A} = A\times_1 \mathcal P_1 \times_2 \mathcal P_2.
\]
In the matrix case $d = 2$, this idea is not particularly appealing, 
since it requires more matrix-vector products. 

For the ``standard'' GN, only one of two projections needs to be 
computed, since we may write:
\[
\hat{A} = A\times_1 \mathcal{P}_1 \times_2 \mathcal{P}_2 = A\times_1 \mathcal{P}_1 = A\times_2 \mathcal{P}_2.
\]
The idea will however be valuable in the tensor setting, where we 
have already anticipated that the simplification of the projections
that happens for $d = 2$ is not easily obtainable. The next section 
is devoted to analyzing \eqref{extended_nystrom} in detail; we anticipate that its accuracy is just slightly less than GN, but still near-optimal, its cost is about twice as expensive as GN and crucially, can be implemented in a numerically stable fashion.

This more general approximant is expressed in a form 
that can be readily extended 
to the tensor setting. Given a tensor $\mathcal{A}\in \mathbb{R}^{n_1\times \dots \times n_d}$ and sketch matrices $X_i\in \mathbb{R}^{\prod_{j\neq i}n_j\times r_i}$ and $Y_i\in \mathbb{R}^{n_i\times (r_i +\ell_i)}$, $i=1,\dots, d$, we define the \textit{multilinear Nystr\"om} (MLN) approximant  $\hat{\mathcal{A}}$ of $\mathcal{A}$ as follows:
    \begin{equation} \label{extended_mlnystrom}
\widehat{\mathcal{A}}  =   (\mathcal{A}\times_{k=1}^d Y_k^T)\times_{k=1}^d \mathcal{A}_k X_k(Y_k^T\mathcal{A}_k X_k)^{\dagger}.
\end{equation}
The approximant is obtained by sketching with matrices $Y_k$, and 
using oblique projections built with the matrices $X_k$ in order 
to construct a low-rank Tucker factorization. The oblique projectors 
are easy to write explicitly: we have $\mathcal{P}_k := \mathcal{A}_k X_k(Y_k^T\mathcal{A}_k X_k)^{\dagger}Y_k^T$, and this will be 
helpful for our analysis later on. 
In equation \eqref{extended_mlnystrom} we assumed $Y_k$ with more columns than $X_k$. Nevertheless, it is worth noting that by relaxing this assumption, the resulting reformulation is an extension of the
original one in \eqref{mlnystrom}, 
which can be recovered by substituting
$Y_k$ with $X_k$ and $X_k$ with $\Xkk$ 
in \eqref{extended_mlnystrom}.

The pseudocode in Algorithm~\ref{alg:MLN} describes the method for computing the approximant in Equation $\eqref{extended_mlnystrom}$. 
The QR factorization in 
the pseudocode is the standard economy-size QR factorization without pivoting.

We remark that even though we obtained formula \ref{extended_mlnystrom} with a different approach, the MLN approximant turns out to be mathematically equivalent to the one provided by Algorithm 4.3 in \cite{Another_Tucker}. 
There are however algorithmic differences: our implementation avoids the expensive QR factorization of $A_kX_k$, and, as we prove in Section \ref{sec:stability}, has the same stability properties of GN.

\begin{algorithm}
\caption{Multilinear Nystr\"om (MLN)} 
\vspace{2mm}
\textbf{Input} \hspace{6mm} $\mathcal{A}\in \mathbb{R}^{n_1\times\dots \times n_d}$, multilinear rank $r = (r_1, \dots, r_d) \leq (n_1,\dots, n_d)$, \\
\phantom{\textbf{Input}} \hspace{6mm} oversampling vector $\ell= (\ell_1,\dots, \ell_d)$.\\
\textbf{Output} \hspace{3mm} Low-rank Tucker approximant $\hat{\mathcal{A}}$ of $\mathcal{A}$.\\
\\
\phantom{\textbf{Input}} \hspace{6mm} \textbf{for} $k = 1, \dots, d$\\
\mbox{}~~~~~~~~~~~~~~~~~~~Draw random matrices $X_k\in \mathbb{R}^{\prod_{i\neq k} n_i\times r_k}$ and $Y_k\in \mathbb{R}^{n_k\times (r_k+\ell_k)}$;\\
\mbox{}~~~~~~~~~~~~~~~~~~~Compute $\mathcal{A}_k X_k$, $Y_k^T\mathcal{A}_k$; \\ 
\mbox{}~~~~~~~~~~~~~~~~~~~Compute an economy-size QR
factorization $Y_k^T\mathcal{A}_k X_k = Z_k R_k$;\\
\mbox{}~~~~~~~~~~~~~~~~~~~Compute $\hat{\mathcal{A}} = ((\mathcal{A}\times_{k=1}^d Y_k^T)\times_{k=1}^d Z_k^T)\times_{k=1}^d (\mathcal{A}_k X_k R_k^{-1}).$ \\
\phantom{\textbf{Input}} \hspace{6mm} \textbf{end}
\vspace{2mm}
\label{alg:MLN}
\end{algorithm}

\section{Properties of MLN}

In this section, we prove that the accuracy of MLN is near-optimal and that the version of Algorithm~\ref{alg:MLN} with the $\varepsilon$-pseudoinverse, stabilized multilinear Nystr\"om (SMLN), 
may be implemented in a stable way. 

Concerning accuracy, our objective is to show that, choosing appropriate
sketchings, the performances attained by MLN are close to the one of RHOSVD (a tensor version of HMT), 
which is in turn close to the HOSVD with high probability \cite{higherorderHMT}. 

For what concerns the stabilization, we will show how some ideas from 
\cite{nakatsukasa2020fast} can be generalized from the matrix to the tensor setting, allowing stronger stability guarantees. This will be 
obtained thanks to the particular choices of sketchings that we made 
in the previous section, introducing the matrices $Y_k$, and is not easy 
to obtain otherwise (such as in the first multilinear Nystr\"om proposed by Caiafa and Cichocki \cite{caiafa2014stable}, where some form of stability 
has been shown only in the matrix case $d = 2$). 

\subsection{Accuracy of MLN}

In order to prove the results related to the accuracy of the MLN scheme, 
we now introduce a few auxiliary lemmas. 

We denote by $\mathcal{P}_k$ the oblique projections $\mathcal P_k := \mathcal{A}_k X_k(Y_k^T\mathcal{A}_k X_k)^{\dagger}Y_k^T$. In this way, we may write the approximation obtained by MLN with the compact notation 
$\hat{\mathcal{A}} = \mathcal{A}\times_{k=1}^d \mathcal{P}_k$.

\begin{lemma}\label{lemma_1}
Let $\mathcal A$ be a $d$-dimensional tensor, and 
$\mathcal P_k := \mathcal{A}_k X_k(Y_k^T\mathcal{A}_k X_k)^{\dagger}Y_k^T$ 
for sketching matrices $X_k, Y_k$ of compatible dimensions. Then, the following inequality holds
\begin{equation} \label{sum_formula}
\| \mathcal{A} - \widehat{\mathcal{A}} \|_F \leq \displaystyle\sum_{k=1}^{d} \|\mathcal{A}\times_{i=1}^{k-1}\mathcal{P}_i - \mathcal{A}\times_{i=1}^k\mathcal{P}_i\|_F.
\end{equation}
\end{lemma}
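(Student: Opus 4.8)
The plan is to exploit a telescoping decomposition of the global error together with the triangle inequality for the Frobenius norm. Concretely, for $k = 0, 1, \ldots, d$ I would introduce the intermediate tensors obtained by applying the first $k$ oblique projectors,
\[
\mathcal{B}_k := \mathcal{A}\times_{i=1}^{k}\mathcal{P}_i,
\]
with the convention that the empty product ($k = 0$) leaves the tensor untouched, so that $\mathcal{B}_0 = \mathcal{A}$, while $\mathcal{B}_d = \mathcal{A}\times_{i=1}^{d}\mathcal{P}_i = \widehat{\mathcal{A}}$ by definition of the MLN approximant. Since each $\mathcal{P}_i$ acts on a distinct mode $i$, and mode products along different modes commute (as recalled in Section~\ref{sec: mlnystrom}), this notation is unambiguous regardless of the order in which the projectors are applied.

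With this notation the global error splits as a telescoping sum,
\[
\mathcal{A} - \widehat{\mathcal{A}} = \mathcal{B}_0 - \mathcal{B}_d = \sum_{k=1}^{d}\left(\mathcal{B}_{k-1} - \mathcal{B}_k\right),
\]
and taking Frobenius norms and applying the triangle inequality immediately yields
\[
\|\mathcal{A} - \widehat{\mathcal{A}}\|_F \leq \sum_{k=1}^{d}\|\mathcal{B}_{k-1} - \mathcal{B}_k\|_F = \sum_{k=1}^{d}\|\mathcal{A}\times_{i=1}^{k-1}\mathcal{P}_i - \mathcal{A}\times_{i=1}^{k}\mathcal{P}_i\|_F,
\]
which is exactly \eqref{sum_formula}. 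The two endpoints $k=1$ and $k=d$ reproduce respectively $\|\mathcal{A} - \mathcal{A}\times_1\mathcal{P}_1\|_F$ and $\|\mathcal{A}\times_{i=1}^{d-1}\mathcal{P}_i - \widehat{\mathcal{A}}\|_F$, which I would check to confirm the correct bookkeeping of the indices.

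There is no genuine analytic difficulty here: the only points requiring care are the empty-product convention defining $\mathcal{B}_0 = \mathcal{A}$ and the commutativity of mode products on distinct modes, both of which make the partial-product notation $\mathcal{A}\times_{i=1}^{k}\mathcal{P}_i$ well defined. I would emphasize that, in contrast with the HOSVD/ST-HOSVD analysis where the projectors are orthogonal and a sharper Pythagorean (squared-norm) bound is available, here the $\mathcal{P}_k$ are \emph{oblique} projections, so summing the unsquared norms via the triangle inequality is the natural and essentially only elementary estimate. The real work is deferred to the subsequent lemmas, where each individual summand $\|\mathcal{B}_{k-1} - \mathcal{B}_k\|_F$ must be controlled in terms of the optimal truncation error and of the sketching matrices $X_k, Y_k$.
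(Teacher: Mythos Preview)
Your proposal is correct and matches the paper's proof essentially verbatim: the paper also writes $\mathcal{A}-\widehat{\mathcal{A}}$ as the telescoping sum $\sum_{k=1}^d\bigl(\mathcal{A}\times_{i=1}^{k-1}\mathcal{P}_i-\mathcal{A}\times_{i=1}^{k}\mathcal{P}_i\bigr)$ and then applies the triangle inequality for the Frobenius norm. Your additional remarks on the empty-product convention and on why no Pythagorean identity is available are accurate but go beyond what the paper spells out.
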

\begin{proof} 
We expand the approximation error
$\mathcal A - \hat{\mathcal A}$ in 
the telescopic sum 
\[
  \mathcal A - \hat{\mathcal A} = 
    \sum_{k = 1}^d \left( \mathcal{A}\times_{i=1}^{k-1}\mathcal{P}_i - \mathcal{A}\times_{i=1}^k\mathcal{P}_i \right).
\]
The result follows by taking Frobenius
norms on both sides and using the sub-additivity property.
\end{proof}

The terms in the summation above are of the form 
$\| \mathcal{B} - \mathcal{B} \times_k \mathcal P_k \|_F$, where $\mathcal{B} = \mathcal A \times_{i = 1}^{k-1} \mathcal P_i$. This fact allows us to relate the 
approximation error of MLN with the approximation error of a GN obtained 
by appropriately flattening the tensors. 

The next lemma is a step in this direction: it relates the 
approximation error along mode-$1$ with the error in the projection 
used with HMT on the matricization. The result is in line with 
the one proved in \cite{nakatsukasa2020fast} for the matrix case 
when analyzing GN. 

\begin{lemma}\label{lemma_2}
   Let $\mathcal{A}\in \mathbb{R}^{n_1\times \dots \times n_d}$, $X_1\in \mathbb{R}^{\prod_{k\neq 1} n_k \times r_1}$ and let $Q_1 R_1$ denote an economy size QR of 
   $\mathcal{A}_1 X_1\in \mathbb{R}^{{n_1}\times r_1}$. If $Y_1$ has $r_1 + \ell_1$ 
   columns and $Y_1^T Q_1$ has rank $r_1$ then
\begin{equation}
\| \mathcal{A} - \mathcal{A}\times_1 \mathcal{P}_1 \|_F \leq \|Q_{1\perp}^T\mathcal{A}_1\|_F\sqrt{1 + \| (Y_1^T Q_1)^\dagger\|_2^2 \|Y_1^TQ_{1\perp}\|_2^2}.
\end{equation}
\end{lemma}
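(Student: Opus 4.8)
The plan is to reduce the claim to a matrix statement about the mode-$1$ matricization and then reproduce the structure of the GN/HMT error bound recalled in Section~\ref{sec:randomized}. First I would exploit that a mode-$1$ product acts as a left multiplication on the matricization, so $(\mathcal A \times_1 \mathcal P_1)_1 = \mathcal P_1 \mathcal A_1$, and that the Frobenius norm is invariant under matricization. Writing $A := \mathcal A_1$, the left-hand side becomes $\| A - \mathcal P_1 A \|_F$ with $\mathcal P_1 = A X_1 (Y_1^T A X_1)^\dagger Y_1^T$, which is exactly the object analyzed for GN in the matrix case.

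The key algebraic step is to simplify the oblique projector. Substituting the economy QR factorization $A X_1 = Q_1 R_1$, and using that $R_1$ is invertible (since $A X_1$ has full column rank $r_1$) while $Y_1^T Q_1$ has full column rank $r_1$ by hypothesis, I would invoke the product rule $(Y_1^T Q_1 R_1)^\dagger = R_1^{-1} (Y_1^T Q_1)^\dagger$, which is valid because the left factor has full column rank and the right factor has full row rank. This collapses the projector to
\[
  \mathcal P_1 = Q_1 (Y_1^T Q_1)^\dagger Y_1^T,
\]
from which one reads off $\mathcal P_1 Q_1 = Q_1$, i.e.\ $\mathcal P_1$ fixes the range of $Q_1$. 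Verifying this pseudoinverse identity under the stated rank conditions is the main technical obstacle, since $(BC)^\dagger = C^\dagger B^\dagger$ fails in general and the whole argument hinges on it.

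With the projector simplified, I would split $A = Q_1 Q_1^T A + Q_{1\perp} Q_{1\perp}^T A$. Because $(I - \mathcal P_1) Q_1 = 0$, the component in the range of $Q_1$ is annihilated, leaving
\[
  A - \mathcal P_1 A = (I - \mathcal P_1) Q_{1\perp} Q_{1\perp}^T A
    = Q_{1\perp} Q_{1\perp}^T A - Q_1 (Y_1^T Q_1)^\dagger Y_1^T Q_{1\perp} Q_{1\perp}^T A.
\]
The two summands have columns lying in the mutually orthogonal ranges of $Q_{1\perp}$ and $Q_1$, so $Q_{1\perp}^T Q_1 = 0$ forces the Frobenius inner product of the two terms to vanish, yielding a Pythagorean identity that separates them.

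Finally I would estimate each piece. The first summand has norm $\| Q_{1\perp}^T A \|_F$, since $Q_{1\perp}$ has orthonormal columns; likewise the leading $Q_1$ in the second summand can be dropped. Applying submultiplicativity in the form $\| B C \|_F \le \| B \|_2 \| C \|_F$ bounds the second summand by $\| (Y_1^T Q_1)^\dagger \|_2 \, \| Y_1^T Q_{1\perp} \|_2 \, \| Q_{1\perp}^T A \|_F$. Summing the two squared contributions, factoring out $\| Q_{1\perp}^T \mathcal A_1 \|_F^2$, and taking a square root then produces exactly the claimed bound.
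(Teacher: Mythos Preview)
Your proposal is correct and follows essentially the same approach as the paper's proof: both reduce to the mode-$1$ matricization, simplify the projector to $\mathcal P_1 = Q_1 (Y_1^T Q_1)^\dagger Y_1^T$ via the QR factorization, use $(I-\mathcal P_1)Q_1 = 0$ together with the decomposition $I = Q_1 Q_1^T + Q_{1\perp}Q_{1\perp}^T$ to isolate the $Q_{1\perp}$ component, apply Pythagoras on the orthogonal ranges, and finish with submultiplicativity. The only cosmetic difference is that the paper adds and subtracts $Q_1 Q_1^T \mathcal A_1$ before simplifying, whereas you simplify the projector first; your explicit invocation of the pseudoinverse product rule $(Y_1^T Q_1 R_1)^\dagger = R_1^{-1}(Y_1^T Q_1)^\dagger$ (and the full-column-rank assumption on $\mathcal A_1 X_1$ it requires) makes transparent a step the paper leaves implicit.
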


\begin{proof}
  The Frobenius norm of a tensor coincides with the one of any 
  of its matricizations; hence, we may write 
  \begin{align*}
      \|\mathcal{A} - \mathcal{A} \times_1 \mathcal{P}_1\|_F&= \|\mathcal{A}_1 - Q_1Q_1^T\mathcal{A}_1 + Q_1Q_1^T \mathcal{A}_1 - \mathcal{A}_1X_1 (Y_1^T\mathcal{A}_1 X_1)^\dagger Y_1^T \mathcal{A}_1\|_F \\
      & =\|\mathcal{A}_1 - Q_1Q_1^T\mathcal{A}_1 + Q_1(Q_1^T -(Y_1^T Q_1)^\dagger Y_1^T)\mathcal{A}_1 \|_F
  \end{align*}
  Let us denote with $Q_{1\perp}$ a matrix whose columns span the 
  orthogonal space to the columns of $Q_1$. Then, the following 
  two identities hold:
  \begin{align}
    Q_1Q_1^T + Q_{1\perp}Q_{1\perp}^T &= I, \label{eq:id1} \\
    (Q_1^T - (Y_1^TQ_1)^\dagger Y_1^T) Q_1 &= 0. \label{eq:id2} 
  \end{align}
The first relation is simply the decomposition of the identity as the 
projections over the column span of $Q_1$ and its orthogonal space; 
the second follows from our assumption that $Y_1^TQ_1$ is of full 
column-rank. We make use of these two identities to further 
simplify the previous expression for $\|\mathcal{A} - \mathcal{A} \times_1 \mathcal{P}_1\|_F$:
\begin{align*}
     \|\mathcal{A} - \mathcal{A} \times_1 \mathcal{P}_1\|_F&
     =\|\mathcal{A}_1 - Q_1Q_1^T\mathcal{A}_1 + Q_1(Q_1^T -(Y_1^T Q_1)^\dagger Y_1^T)\mathcal{A}_1 \|_F
     \\ 
     \text{\scriptsize [\eqref{eq:id1} + \eqref{eq:id2}]} \rightarrow &=  \|Q_{1\perp}Q_{1\perp}^T\mathcal{A}_1 + Q_1(Q_1^T -(Y_1^T Q_1)^\dagger Y_1^T)Q_{1\perp}Q_{1\perp}^T\mathcal{A}_1 \|_F
        \\
{\text{\scriptsize $Q_1^T Q_{1 \perp} = 0$}} \rightarrow &=\| Q_{1\perp}Q_{1\perp}^T\mathcal{A}_1 -Q_1(Y_1^T Q_1)^\dagger (Y_1^TQ_{1\perp})Q_{1\perp}^T\mathcal{A}_1 \|_F, 
\end{align*}
To conclude, 
recall that whenever two matrices $A, B$ have orthogonal columns, 
we have $\| A + B \|_F^2 = \| A \|_F^2 + \| B \|_F^2$. Therefore, 
\begin{align*}
    \|\mathcal{A} - \mathcal{A} \times_1 \mathcal{P}_1\|_F^2 &=
    \| Q_{1\perp}Q_{1\perp}^T\mathcal{A}_1\|_F^2 + \| Q_1(Y_1^T Q_1)^\dagger (Y_1^TQ_{1\perp})Q_{1\perp}^T\mathcal{A}_1 \|_F^2 \\ 
    & \leq {\|Q_{1\perp}^T\mathcal{A}_1\|_F^2 + \| (Y_1^T Q_1)^\dagger\|_2^2 \|Y_1^TQ_{1\perp}\|_2^2 \|Q_{1\perp}^T\mathcal{A}_1 \|_F^2}\\
     & \leq \|Q_{1\perp}^T\mathcal{A}_1\|_F^2 \cdot \left({1 + \| (Y_1^T Q_1)^\dagger\|_2^2 \|Y_1^TQ_{1\perp}\|_2^2}\right).
\end{align*}

Taking the square root on both sides of the inequality
gives us the claim. 
\end{proof}

Lemma~\ref{lemma_2} is stated for $\mathcal P_1$, but clearly holds for 
any $\mathcal P_i$ with $i = 1, \ldots, d$ up to permuting the indices. 
Hence, a rough bound for the accuracy of the low-rank approximation may 
be obtained by bounding the terms in \eqref{sum_formula} as follows:
\[
 \|\mathcal{A}\times_{i=1}^{k-1}\mathcal{P}_i - \mathcal{A}\times_{i=1}^k\mathcal{P}_i\|_F \leq \| \mathcal{A} - \mathcal{A}\times_k \mathcal{P}_k\|_F \prod_{i=1}^{k-1} \|\mathcal{P}_i\|_2. 
\]
We may then proceed by finding upper bounds for 
$\| \mathcal{A} - \mathcal{A}\times_k \mathcal{P}_k\|_F$ and the norms of the projections $\mathcal{P}_k$ separately. However, as discussed in \cite[Section 3.3]{nakatsukasa2020fast}, this would lead to a large overestimate.

To obtain more predictive bounds, we follow another approach 
and consider the tensor $\mathcal{A}\times_{i=1}^k\mathcal{P}_i$ as the tensor $\mathcal{A}\times_{i=1}^{k-1}\mathcal{P}_i$ projected along the $k$th mode.
This yields the following result. Since the proof follows similar 
steps to the one of \ref{lemma_2}, some details are omitted.

\begin{lemma}\label{lemma_3}
    Let $\mathcal{A}\in \mathbb{R}^{n_1\times \dots \times n_d}$, $X_k\in \mathbb{R}^{\prod_{i\neq k} n_i \times r_k}$ and let $Q_k R_k$ denote an economy size QR of 
   $\mathcal{A}_k X_k\in \mathbb{R}^{{n_k}\times r_k}$.
    If $Y_k$ has $r_k + \ell_k$ columns and $Y_k^T Q_k$ has rank 
    $r_k$, then setting
    $E_k := \|\mathcal{A}\times_{i=1}^{k-1} \mathcal{P}_i-\mathcal{A}\times_{i=1}^{k} \mathcal{P}_i\|_F$
    we have
    \begin{align*}
    E_k
    &\leq
     \Big(\|Q_{k\perp}^T\mathcal{A}_k\|_F+ 
     \|\mathcal{A}- \mathcal{A}\times_{i=1}^{k-1}\mathcal{P}_i\|_F
     \Big)\sqrt{1 + \| (Y_k^T Q_k)^\dagger\|_2^2 \|Y_k^TQ_{k\perp}\|_2^2}.
    \end{align*}
\end{lemma}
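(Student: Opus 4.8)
The plan is to reduce the mode-$k$ difference to a matrix computation that is structurally identical to the proof of Lemma~\ref{lemma_2}, but carried out on the intermediate tensor $\mathcal B := \mathcal A \times_{i=1}^{k-1}\mathcal P_i$ rather than on $\mathcal A$ itself. First I would observe that $\mathcal A \times_{i=1}^{k}\mathcal P_i = (\mathcal A \times_{i=1}^{k-1}\mathcal P_i)\times_k \mathcal P_k = \mathcal B \times_k \mathcal P_k$, so that, using that matricization preserves the Frobenius norm, $E_k = \|\mathcal B - \mathcal B\times_k \mathcal P_k\|_F = \|\mathcal B_k - \mathcal P_k \mathcal B_k\|_F$. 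Since $Q_k R_k$ is the economy QR of $\mathcal A_k X_k$ and $Y_k^T Q_k$ is assumed of full column rank $r_k$, the factored-pseudoinverse identity $(Y_k^T Q_k R_k)^\dagger = R_k^{-1}(Y_k^T Q_k)^\dagger$ gives the simplification $\mathcal P_k = \mathcal A_k X_k (Y_k^T \mathcal A_k X_k)^\dagger Y_k^T = Q_k (Y_k^T Q_k)^\dagger Y_k^T$, exactly as in the $k=1$ case.

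Next I would repeat verbatim the manipulations of Lemma~\ref{lemma_2} with $\mathcal B_k$ in the role of $\mathcal A_1$: inserting $Q_k Q_k^T + Q_{k\perp}Q_{k\perp}^T = I$ as in \eqref{eq:id1}, using the vanishing relation $(Q_k^T - (Y_k^T Q_k)^\dagger Y_k^T)Q_k = 0$ from \eqref{eq:id2}, and exploiting $Q_k^T Q_{k\perp} = 0$, one writes $\mathcal B_k - \mathcal P_k \mathcal B_k = Q_{k\perp}Q_{k\perp}^T\mathcal B_k - Q_k (Y_k^T Q_k)^\dagger (Y_k^T Q_{k\perp}) Q_{k\perp}^T\mathcal B_k$. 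The two summands have orthogonal column spaces, so the Pythagorean identity together with the submultiplicativity bound on the second term yields $E_k \leq \|Q_{k\perp}^T\mathcal B_k\|_F\,\sqrt{1 + \|(Y_k^T Q_k)^\dagger\|_2^2\,\|Y_k^T Q_{k\perp}\|_2^2}$, the only difference from Lemma~\ref{lemma_2} being the factor $\|Q_{k\perp}^T\mathcal B_k\|_F$ in place of $\|Q_{k\perp}^T\mathcal A_k\|_F$.

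The one genuinely new ingredient, and the real content of the lemma, is to replace $\|Q_{k\perp}^T\mathcal B_k\|_F$ — which refers to the already-projected tensor $\mathcal B$ — by quantities referring to the original $\mathcal A$. Here I would split $\mathcal B_k = \mathcal A_k + (\mathcal B_k - \mathcal A_k)$ and apply the triangle inequality to get $\|Q_{k\perp}^T\mathcal B_k\|_F \leq \|Q_{k\perp}^T\mathcal A_k\|_F + \|Q_{k\perp}^T(\mathcal B_k - \mathcal A_k)\|_F$. Because $Q_{k\perp}$ has orthonormal columns, $Q_{k\perp}Q_{k\perp}^T$ is an orthogonal projection and hence $Q_{k\perp}^T$ is a Frobenius-norm contraction, so $\|Q_{k\perp}^T(\mathcal B_k - \mathcal A_k)\|_F \leq \|\mathcal B_k - \mathcal A_k\|_F = \|\mathcal A - \mathcal A\times_{i=1}^{k-1}\mathcal P_i\|_F$, using norm-invariance of matricization once more. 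Substituting this two-term bound into the previous display gives precisely the claimed inequality.

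I do not expect a serious obstacle: the delicate points — the pseudoinverse factorization (which uses the full-rank hypotheses on $R_k$ and $Y_k^T Q_k$ exactly as stated) and the orthogonality of the two summands — are inherited unchanged from Lemma~\ref{lemma_2}, which is why the author signals that those details may be omitted. The only new step is the triangle-inequality split of $\mathcal B_k$ combined with the contraction estimate, and its role is simply to produce the additive correction term $\|\mathcal A - \mathcal A\times_{i=1}^{k-1}\mathcal P_i\|_F$ that accounts for the error already accumulated over the first $k-1$ modes.
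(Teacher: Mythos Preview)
Your proposal is correct and matches the paper's proof essentially step for step: the paper introduces the Kronecker factor $B_k = I\otimes\cdots\otimes I\otimes\mathcal P_{k-1}^T\otimes\cdots\otimes\mathcal P_1^T$ so that your $\mathcal B_k$ is precisely its $\mathcal A_k B_k$, reruns the Lemma~\ref{lemma_2} computation to reach $E_k\le\|Q_{k\perp}^T\mathcal A_k B_k\|_F\sqrt{1+\|(Y_k^TQ_k)^\dagger\|_2^2\|Y_k^TQ_{k\perp}\|_2^2}$, and then performs the same add-and-subtract of $\mathcal A_k$ followed by the triangle inequality and the contraction bound $\|Q_{k\perp}^T(\cdot)\|_F\le\|\cdot\|_F$. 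The only cosmetic difference is that you name the intermediate tensor $\mathcal B$ while the paper carries the matrix $B_k$ explicitly.
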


\begin{proof}
    Let us define $B_1 = I$ and $B_k = I\otimes \dots \otimes I \otimes \mathcal{P}_{k-1}^T\otimes \dots \otimes \mathcal{P}_1^T$ for $k > 1$, so we may write 
    \begin{align*}
        E_k = \|\mathcal{A}\times_{i=1}^{k-1} \mathcal{P}_i-\mathcal{A}\times_{i=1}^{k} \mathcal{P}_i\|_F &= \|\mathcal{A}_k B_k - \mathcal{P}_k\mathcal{A}_kB_k\|_F.
    \end{align*}
    We now follow the analogous steps to the proof for Lemma~\ref{lemma_2}, but taking into account 
    the effect of $B_k$, 
    which yields
     \begin{align*}
    E_k^2 &= \|\mathcal{A}_k B_k - Q_k Q_k^T\mathcal{A}_k B_k + Q_k Q_k^T \mathcal{A}_k B_k - \mathcal{A}_k X_k (Y_k^T \mathcal{A}_k X_k)^\dagger Y_k^T \mathcal{A}_kB_k\|_F^2\\
     & = \|Q_{k\perp}Q_{k\perp}^T \mathcal{A}_k B_k +Q_k(Q_k^T - (Y_k^T Q_k)^\dagger Y_k^T)\mathcal{A}_k B_k\|_F^2\\
     & = \| Q_{k\perp}Q_{k\perp}^T\mathcal{A}_k B_k -Q_k(Y_k^T Q_k)^\dagger (Y_k^TQ_{k\perp})Q_{k\perp}^T\mathcal{A}_k B_k \|_F^2\\
        & = {\| Q_{k\perp}Q_{k\perp}^T\mathcal{A}_k B_k\|_F^2 + \| Q_k(Y_k^T Q_k)^\dagger (Y_k^TQ_{k\perp})Q_{k\perp}^T\mathcal{A}_kB_k \|_F^2}\\
         \qquad & \leq {\|Q_{k\perp}^T\mathcal{A}_k B_k\|_F^2 + \| (Y_k^T Q_k)^\dagger\|_2^2 \|Y_k^TQ_{k\perp}\|_2^2 \|Q_{k\perp}^T\mathcal{A}_k B_k \|_F^2}\\
          & \leq \|Q_{k\perp}^T\mathcal{A}_k B_k\|_F^2 \Big( {1 + \| (Y_k^T Q_k)^\dagger\|_2^2 \|Y_k^TQ_{k\perp}\|_2^2 \Big)}\\
         & =\|Q_{k\perp}^T(\mathcal{A}_k + \mathcal{A}_k
B_k - \mathcal{A}_k)\|_F^2 \Big( 1 + \| (Y_k^T Q_k)^\dagger\|_2^2 \|Y_k^TQ_{k\perp}\|_2^2 \Big) \\
&\leq \Big( 
  \|Q_{k\perp}^T \mathcal{A}_k\|_F + \|\mathcal{A}_k -\mathcal{A}_k B_k\|_F \Big)^2 \Big( 1 + \| (Y_k^T Q_k)^\dagger\|_2^2 \|Y_k^TQ_{k\perp}\|_2^2 \Big).
     \end{align*}
     As in the previous result, the thesis follows by taking the square root on both sides of the identity. 
\end{proof}

\begin{remark} \label{remark_HMT_error}
  The term $\|Q_{k\perp}^T \mathcal{A}_k\|_F$ is the approximation error of $HMT$ of the matrix $\mathcal{A}_k$ with sketch matrix $X_k$. Thus, by \eqref{eq:HMT_error_bound} we have 
  \[
  \|Q_{k\perp}^T \mathcal{A}_k\|_F\leq \|\widetilde{\Sigma}_k\|_F\sqrt{1 + \|\widetilde{V}_{k \perp}^{T}X_k\|_2^2\|(\widetilde{V}_k^T X_k)^\dagger\|^2_2}
  \]
where, for any $\hat{r}_k < r_k$, $\widetilde{V}_k$ is the orthogonal matrix with the leading $\hat{r}_k$ right singular vectors of $\mathcal{A}_k$ and $\widetilde{\Sigma}_k$ is the diagonal term in the SVD of $\mathcal{A}_k$ with a $0$ in place of the first $\hat{r}_k$ singular values.
\end{remark}

We now combine these results to state a deterministic 
accuracy bound for MLN. Here, deterministic means that the bound 
is exact as long as the sketchings $X_k, Y_k$ have been fixed. When
these sketches are instead random variables 
with a known distribution, the result yields probabilistic estimates. 

Recall that we denote by $\mlappr_{\varepsilon}(r_1, \ldots, r_d)$ the set of tensors that admit an approximation
with multilinear rank at most $(r_1, \ldots, r_d)$ and an 
approximation error at most $\epsilon$ in the Frobenius norm (see Definition~\ref{def:mlappr}). 

\begin{thm}[Deterministic accuracy bound]\label{thm : deterministic_bound}
 Let $\mathcal{A}\in \mlappr_{\varepsilon}(r_1, \ldots, r_d)$,  let $\hat{\mathcal{A}}$ be the approximant  in \eqref{extended_mlnystrom}, and set 
 \[\tau_k:= \sqrt{1 + \| (Y_k^T Q_k)^\dagger\|_2^2 \|Y_k^TQ_{k\perp}\|_2^2}\quad and \quad \rho_k := \sqrt{1 + \|\widetilde{V}_{k\perp}^{T}X_k\|_2^2\|(\widetilde{V}_k^T X_k)^\dagger\|_2^2},\]
where $\widetilde{V}_k$ is an orthogonal matrix with the first $r_k$ right singular vectors of $\mathcal{A}_k$ and $Q_k = \mathrm{orth}(\mathcal{A}_k X_k)$.
 Then, denoting with $\tau = \max_{k} \tau_k$ and $\rho = \max_{k} \rho_k$, the following bound holds:
 \[
 \|\mathcal{A}-\hat{\mathcal{A}}\|_F \leq \varepsilon \rho ((1+\tau)^d-1).
 \]
 \end{thm}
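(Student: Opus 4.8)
The plan is to recognize that $\|\mathcal{A}-\hat{\mathcal{A}}\|_F = F_d$, where I set $F_k := \|\mathcal{A} - \mathcal{A}\times_{i=1}^k \mathcal{P}_i\|_F$ (so that $F_0 = 0$, since the empty product leaves $\mathcal{A}$ unchanged, and $F_d = \|\mathcal{A}-\hat{\mathcal{A}}\|_F$ because $\hat{\mathcal{A}} = \mathcal{A}\times_{k=1}^d\mathcal{P}_k$), and then to derive a first-order linear recurrence for $F_k$ in terms of $F_{k-1}$. The engine driving the recurrence is Lemma~\ref{lemma_3}, which bounds each increment $E_k$ in terms of $\|Q_{k\perp}^T\mathcal{A}_k\|_F$, the cumulative error $F_{k-1}$, and the factor $\tau_k$.

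First I would control the term $\|Q_{k\perp}^T\mathcal{A}_k\|_F$ using the membership hypothesis. By Remark~\ref{remark_HMT_error}, applied with $\hat{r}_k = r_k$, this term is at most $\|\widetilde{\Sigma}_k\|_F\,\rho_k$, where $\|\widetilde{\Sigma}_k\|_F = \sqrt{\sum_{j>r_k}\sigma_j^2(\mathcal{A}_k)}$ is exactly the best rank-$r_k$ Frobenius approximation error of $\mathcal{A}_k$. Since $\mathcal{A}\in\mlappr_{\varepsilon}(r_1,\ldots,r_d)$, there exists $\mathcal E$ with $\|\mathcal E\|_F\le\varepsilon$ and $rk^{ML}(\mathcal{A}+\mathcal E)\le(r_1,\ldots,r_d)$; matricizing along mode $k$ shows that $\mathcal{A}_k + \mathcal E_k$ has rank at most $r_k$ and, because matricization preserves the Frobenius norm, lies within $\varepsilon$ of $\mathcal{A}_k$. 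Hence $\|\widetilde{\Sigma}_k\|_F\le\varepsilon$, and consequently $\|Q_{k\perp}^T\mathcal{A}_k\|_F\le\varepsilon\rho_k\le\varepsilon\rho$ for every $k$.

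Next I would assemble the recurrence. Splitting $\mathcal{A}-\mathcal{A}\times_{i=1}^k\mathcal{P}_i$ as the sum of $\mathcal{A}-\mathcal{A}\times_{i=1}^{k-1}\mathcal{P}_i$ and $\mathcal{A}\times_{i=1}^{k-1}\mathcal{P}_i-\mathcal{A}\times_{i=1}^k\mathcal{P}_i$ and applying subadditivity gives $F_k\le F_{k-1}+E_k$. Inserting Lemma~\ref{lemma_3} together with the bounds $\|Q_{k\perp}^T\mathcal{A}_k\|_F\le\varepsilon\rho$ and $\tau_k\le\tau$ yields $E_k\le(\varepsilon\rho+F_{k-1})\tau$, so that
\[
F_k \le (1+\tau)\,F_{k-1} + \varepsilon\rho\tau, \qquad F_0 = 0.
\]
Unrolling this recurrence produces $F_k \le \varepsilon\rho\tau\sum_{j=0}^{k-1}(1+\tau)^j = \varepsilon\rho\tau\cdot\frac{(1+\tau)^k-1}{\tau} = \varepsilon\rho\bigl((1+\tau)^k-1\bigr)$, and evaluating at $k=d$ gives the claimed bound.

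Given the preparatory lemmas, the argument is mostly mechanical, but the one point requiring genuine care is the choice to recurse on the cumulative error $F_k$ rather than simply summing the increments. A direct application of Lemma~\ref{lemma_1} would give $\|\mathcal{A}-\hat{\mathcal{A}}\|_F\le\sum_k E_k$, yet each $E_k$ in Lemma~\ref{lemma_3} still carries the cumulative term $F_{k-1}$; resolving this coupling cleanly is precisely what the telescoping recurrence accomplishes, and its solution is what manufactures the geometric factor $(1+\tau)^d-1$. The second item to check carefully is the identification of $\|\widetilde{\Sigma}_k\|_F$ with the optimal rank-$r_k$ error and its bound by $\varepsilon$, which is the sole place the $\varepsilon$-approximability hypothesis is used.
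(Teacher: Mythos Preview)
Your proof is correct and rests on the same ingredients as the paper's---Lemma~\ref{lemma_3} for each mode, Remark~\ref{remark_HMT_error} to control $\|Q_{k\perp}^T\mathcal{A}_k\|_F$, and the $\varepsilon$-approximability hypothesis to bound the optimal rank-$r_k$ error of each matricization by $\varepsilon$. The difference lies only in how the resulting recurrence is packaged and solved. The paper tracks the increments $E_k$ themselves, obtaining the coupled system $E_{k+1}\le\bigl(\varepsilon\rho+\sum_{i\le k}E_i\bigr)\tau$, which it then inverts by writing it as a lower-triangular Toeplitz system and computing the inverse explicitly to get $E_k\le\varepsilon\rho\tau(1+\tau)^{k-1}$; summing these yields the bound. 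You instead track the cumulative error $F_k=\|\mathcal{A}-\mathcal{A}\times_{i=1}^k\mathcal{P}_i\|_F$ directly, which collapses the triangular system into the scalar first-order recurrence $F_k\le(1+\tau)F_{k-1}+\varepsilon\rho\tau$ and makes the geometric sum immediate. Your route is a bit more economical; the paper's route makes the contribution of each individual mode visible, which is useful later when they illustrate the growth factor numerically (Tables~\ref{tab: exponential growth}--\ref{tab: exponential growth2}). One minor point you omit that the paper includes is a short verification that $\tau_k$ and $\rho_k$ are well-defined, i.e., independent of the particular orthogonal representatives $Q_k$ and $\widetilde{V}_k$; this is not needed for the inequality itself but is worth noting for completeness.
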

\begin{proof}
Note that $\tau_k$ and $\rho_k$ 
are defined in terms of $Q_k$ and $\widetilde V_k$, which are uniquely determined
only up to 
right multiplication by appropriate unitary matrices; we start by verifying that $\tau_k$ and $\rho_k$ do not depend on the specific choice
of $Q_k$ and $\widetilde V_k$, and are therefore well-defined.

Since 
$\widetilde V_k^T X_k$ is square and 
$Y_k^T Q_k$ has more rows than columns,
for any unitary matrices $Z, W$
\[
  (Z^T \widetilde V_k^T X_k)^\dagger = (\widetilde V_k^T X_k)^\dagger Z, \qquad 
  (W^T Y_k^T Q_k)^\dagger = 
  (Y_k^T Q_k)^\dagger W.
\]
Hence, thanks to the invariance of the spectral 
norm under unitary transformation, we conclude that 
$\tau_k$ and $\rho_k$ do not
depend on the specific choice of $Q_k$ and 
$\widetilde V_k$, as desired. 

We now prove that the sought inequality holds. 
Thanks to \eqref{sum_formula}, to 
obtain the claim it is sufficient to bound terms of the form $E_k = \|\mathcal{A}\times_{i=1}^{k-1}\mathcal{P}_i-\mathcal{A}\times_{i=1}^k \mathcal{P}_i\|_F$, which then leads to the upper bound 
\begin{equation} \label{eq:sumEk}
  \| \mathcal A - \hat{\mathcal A} \|_F \leq
{\sum_{k=1}^d E_k}.
\end{equation}
We use Lemma \ref{lemma_2} and Remark \ref{remark_HMT_error} to obtain the upper bound 
\[
E_1 \leq \varepsilon_1 \rho_1 \tau_1,
\]
where $\varepsilon_k$ denotes the best possible error of approximation in Frobenius norm of rank $r_k$ of $\mathcal{A}_k$. 
Similarly, we make 
use of Lemma~\ref{lemma_3} for all the remaining modes, which yields for $k = 1,\dots, d-1$
the recurrence relation 
\[
E_{k+1} \leq (\varepsilon_k\rho_k + \|\mathcal{A}-\mathcal{A}\times_{i\leq k} \mathcal{P}_i\|_F)\tau_k \leq (\varepsilon_k\rho_k + \sum_{i\leq k} E_i) \tau_k\leq (\varepsilon \rho + \sum_{i\leq k} E_i) \tau.
\]
In the last inequality, we used $\max_k\varepsilon_k\leq \varepsilon$, 
which holds 
thanks to $\mathcal{A}\in \mlappr_{\varepsilon}(r_1, \ldots, r_d)$.
Then, the $E_k$ for $k = 1, \dots, d$ satisfy the vector inequality
\begin{equation} \label{eq:triangular system}
    \begin{bmatrix}
        \tau^{-1} & 0 & \cdots& \cdots & 0 \\
        -1 & \ddots & \ddots & &\vdots\\
        \vdots & \ddots & \ddots & \ddots& \vdots\\
        \vdots &  & \ddots & \ddots & 0\\
        -1 & \cdots & \cdots & -1 &  \tau^{-1} 
    \end{bmatrix}
    \begin{bmatrix}
    E_1\\
    \vdots\\
     \vdots\\
      \vdots\\
    E_d
    \end{bmatrix}\leq \begin{bmatrix}
     \varepsilon\rho\\
    \vdots\\
     \vdots\\
      \vdots\\
    \varepsilon \rho
    \end{bmatrix}.
\end{equation}
Let $T_k$ be the $k\times k$ principal minor of the lower triangular matrix in \eqref{eq:triangular system}; Both $T_k$
and its inverse are 
lower triangular Toeplitz matrices, and we have the
explicit formula:
\[
 (T_k^{-1})_{ij} = \begin{cases}
     0 & \text{if } i < j \\ 
     \tau & \text{if } i = j \\ 
    \tau \left[ (1 + \tau)^{i-j}  - (1 + \tau)^{i-j-1} \right] & 
      \text{if } i > j
 \end{cases} 
\]
In particular, $T_k^{-1}$ is non-negative since $\tau > 0$, 
so we can left-multiply 
inequality \eqref{eq:triangular system}
by $e_k^T T_k^{-1}$ and obtain an upper 
bound for $E_k$:
\begin{equation}
    E_k \leq e_k^T T_k^{-1} \begin{bmatrix}
     \varepsilon \rho\\
    \vdots\\
    \varepsilon \rho
    \end{bmatrix}.
\end{equation}
Using the explicit expression of the entries in the row vector 
$e_k^T T_k^{-1}$, we finally obtain the upper bound 
$
E_k \leq  \varepsilon \rho \tau(1+\tau )^{k-1}.
$
We conclude using \eqref{eq:sumEk}:
\begin{equation*}
\|\mathcal{A}-\hat{\mathcal{A}}\|_F
\leq \sum_{k=1}^d \varepsilon \rho \tau (1+\tau)^{k-1}
\leq \varepsilon\rho ((1+\tau)^d -1).
\end{equation*}
\end{proof}

\subsection{Accuracy of SMLN} \label{sec:accuracy of SMLN}

We now consider a modification of the proposed approach that improves the stability properties: we replace any pseudoinverse 
$M^\dagger$ appearing in the formulas with its regularized 
counterpart $M^\dagger_\epsilon$, which consists in treating the
singular values below $\epsilon$ in $M$ as zeros
(see the definition of $\epsilon$-pseudoinverse at the 
end of Section~\ref{sec:randomized}).
We refer to such modification as 
stabilized multilinear Nystr\"om (SMLN). In practice, this amounts to replacing the projections $\mathcal P_k$
with 
\[
  \widetilde{\mathcal P}_k :=  \mathcal{A}_k X_k (Y^T_k \widetilde{\mathcal{A}}_k X_k)_\epsilon^\dagger Y_k^T.
\]
This modification is motivated by the following 
observations made in the 
analysis of the matrix case in \cite{nakatsukasa2020fast}:
\begin{enumerate}
    \item For the matrix GN, this change does not substantially change 
      the attainable accuracy;
    \item This modification makes the method reliable in the presence of 
      inexact floating point arithmetic.
\end{enumerate}
This and the next section~\ref{sec:stability} 
investigate if the same results
hold in the tensor case, for the generalization discussed in this paper. 
This section covers the first item (the accuracy), whereas 
the next section \ref{sec:stability} discusses the second item (the 
stability).

Concerning the accuracy, 
we prove that the computed approximant 
$\hat{\mathcal A} = \mathcal A \times_{k = 1}^d \widetilde {\mathcal P}_k$ 
attains, in exact arithmetic, 
an error estimate of the form 

\[
 \| \mathcal A - \hat{\mathcal A} \|_F \leq 
\frac{(1 + \tilde \tau)^d - 1}{\tilde \tau} 
   \max_{k = 1, \ldots, d} 
        \| E_{SGN}^{(k)} \|_F,
\]
where $E_{SGN}^{(k)}$ is the error of the matrix SGN approximation computed in exact arithmetic for the 
      mode-$k$ matricization $\mathcal{A}_k$ with sketchings $X_k$, $Y_k$ and where $\widetilde \tau$ is chosen so that 
$\| \widetilde{\mathcal P}_k \|_2 \leq \widetilde \tau$. 

In relation to the stability result, to obtain a similar estimate, it will be necessary to take into account the floating-point error of approximation. In doing so we will make the simplifying assumption 
that $\mathcal A_k X_k$ and $Y_k^T \mathcal A_k X_k$ are computed
exactly. On one hand, this is obviously unrealistic. On the other hand, any 
sketching low-rank approximation method will use matrices of this form, and 
will thus incur a similar approximation error; in this work, we prefer to focus 
on the error introduced by the algorithmic choice in SMLN that happens after the 
sketching.

The upcoming theorem proves the accuracy result discussed above.

\begin{thm} \label{SMLN_stability}
    Let $\widetilde{\mathcal{P}}_k := \mathcal{A}_k X_k (Y^T_k \widetilde{\mathcal{A}}_k X_k)_\epsilon^\dagger Y_k^T$, where $\widetilde{\mathcal{A}}_k = \mathcal{A}_k+\delta \mathcal{A}_k$ and set 
    \[
    \widetilde{\varepsilon}_k := \|\mathcal{A}-\mathcal{A}\times_k \widetilde{\mathcal{P}}_k\|_F\quad and \quad  \widetilde{\tau}_k := 1 + \|\widetilde{\mathcal{P}}_k\|_2.
    \]
    Then, denoting with $\widetilde\varepsilon:= \max_k\widetilde \varepsilon_k$ and $\widetilde\tau:= \max_k \widetilde\tau_k$, we have
    \[
    \|\mathcal{A}-\mathcal{A}\times_{k=1}^d\widetilde{\mathcal{P}}_k\|_F \leq  \frac{\widetilde\varepsilon}{\widetilde\tau}((1+\widetilde\tau)^d-1).
    \]
\end{thm}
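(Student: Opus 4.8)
The plan is to mirror the structure of the proof of Theorem~\ref{thm : deterministic_bound}, replacing the deterministic ingredients (Lemma~\ref{lemma_2}, Lemma~\ref{lemma_3}, and the $\varepsilon$-approximability hypothesis) with the single-mode quantity $\widetilde\varepsilon_k = \|\mathcal{A} - \mathcal{A}\times_k \widetilde{\mathcal{P}}_k\|_F$ and the projector-norm bound $\|\widetilde{\mathcal{P}}_k\|_2 \leq \widetilde\tau_k - 1$. First I would apply Lemma~\ref{lemma_1} with the regularized projectors $\widetilde{\mathcal{P}}_k$ in place of $\mathcal{P}_k$ (the telescopic decomposition and the sub-additivity of the Frobenius norm are purely algebraic and do not care whether the projector uses $\dagger$ or $\dagger_\epsilon$), obtaining $\|\mathcal{A}-\hat{\mathcal{A}}\|_F \leq \sum_{k=1}^d \widetilde E_k$ with $\widetilde E_k := \|\mathcal{A}\times_{i=1}^{k-1}\widetilde{\mathcal{P}}_i - \mathcal{A}\times_{i=1}^{k}\widetilde{\mathcal{P}}_i\|_F$.

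The central step is to derive a recurrence for $\widetilde E_k$ analogous to the one used before. Writing $\mathcal{B} = \mathcal{A}\times_{i=1}^{k-1}\widetilde{\mathcal{P}}_i$, each term is of the form $\|\mathcal{B} - \mathcal{B}\times_k \widetilde{\mathcal{P}}_k\|_F$. I would split this as
\begin{align*}
\|\mathcal{B} - \mathcal{B}\times_k \widetilde{\mathcal{P}}_k\|_F
&\leq \|\mathcal{A} - \mathcal{A}\times_k \widetilde{\mathcal{P}}_k\|_F + \|(\mathcal{A}-\mathcal{B})\times_k(I-\widetilde{\mathcal{P}}_k)\|_F,
\end{align*}
using that $\mathcal{B}-\mathcal{B}\times_k\widetilde{\mathcal{P}}_k = (\mathcal{A}-\mathcal{A}\times_k\widetilde{\mathcal{P}}_k) - (\mathcal{A}-\mathcal{B})\times_k(I-\widetilde{\mathcal{P}}_k)$ followed by the triangle inequality. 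The first summand is exactly $\widetilde\varepsilon_k \leq \widetilde\varepsilon$. For the second, I would bound the mode-$k$ action of $I - \widetilde{\mathcal{P}}_k$ in spectral norm by $1 + \|\widetilde{\mathcal{P}}_k\|_2 = \widetilde\tau_k \leq \widetilde\tau$, and bound $\|\mathcal{A}-\mathcal{B}\|_F = \|\mathcal{A}-\mathcal{A}\times_{i=1}^{k-1}\widetilde{\mathcal{P}}_i\|_F \leq \sum_{i\leq k-1}\widetilde E_i$ via the same telescopic argument. This yields the recurrence $\widetilde E_k \leq \widetilde\varepsilon + \widetilde\tau\sum_{i<k}\widetilde E_i$, or more symmetrically $\widetilde E_{k} \leq (\widetilde\varepsilon + \widetilde\tau\sum_{i<k}\widetilde E_i)$ after absorbing constants, matching the triangular-system form in \eqref{eq:triangular system} with $\varepsilon\rho$ replaced by $\widetilde\varepsilon$ and $\tau$ by $\widetilde\tau$.

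Once the recurrence is cast as a lower-triangular Toeplitz inequality, I would reuse verbatim the inversion computation from the previous proof: the explicit nonnegative inverse $T_k^{-1}$ gives $\widetilde E_k \leq \widetilde\varepsilon\,\widetilde\tau\,(1+\widetilde\tau)^{k-1}$ (here with the roles of the scalar constants adjusted, so that the prefactor becomes $\widetilde\varepsilon$ rather than $\widetilde\varepsilon\,\widetilde\tau$, depending on how the $\widetilde\tau$ is placed), and summing the geometric series over $k=1,\dots,d$ collapses to $\frac{\widetilde\varepsilon}{\widetilde\tau}((1+\widetilde\tau)^d-1)$.

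The main obstacle I anticipate is the bookkeeping in the recurrence constants: the previous proof carried a factor $\rho_k$ and a $\tau_k$ from Lemma~\ref{lemma_3}, whereas here the single-mode error $\widetilde\varepsilon_k$ already packages the full approximation quality of mode $k$, so I must be careful that the homogenization $\widetilde\tau_k = 1 + \|\widetilde{\mathcal{P}}_k\|_2$ enters with exactly the right power to reproduce the prefactor $\widetilde\varepsilon/\widetilde\tau$ and not $\widetilde\varepsilon\,\widetilde\tau$. The delicate point is verifying that the cross term genuinely contributes a factor $\widetilde\tau$ (and not $\widetilde\tau_k-1$ or $\widetilde\tau^{\,k}$) per recursion level; getting this constant right is what distinguishes the clean $\frac{1}{\widetilde\tau}((1+\widetilde\tau)^d-1)$ bound from a looser one. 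Everything downstream of the recurrence is a direct transcription of the Toeplitz-inverse argument already established.
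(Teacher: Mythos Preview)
Your proposal is correct and follows essentially the same route as the paper: telescopic split, the add-and-subtract $\mathcal{A}_k$ trick to isolate $\|(I-\widetilde{\mathcal P}_k)\mathcal{A}_k\|_F=\widetilde\varepsilon_k$ and the cross term bounded by $\widetilde\tau_k\sum_{i<k}\widetilde E_i$, then the same triangular recurrence solved by the Toeplitz-inverse argument. Your worry about the constants is unfounded: the recurrence you wrote, $\widetilde E_k\le\widetilde\varepsilon+\widetilde\tau\sum_{i<k}\widetilde E_i$ with $\widetilde E_1\le\widetilde\varepsilon$, gives $\widetilde E_k\le\widetilde\varepsilon(1+\widetilde\tau)^{k-1}$ by induction, and the geometric sum $\sum_{k=1}^d(1+\widetilde\tau)^{k-1}=\frac{(1+\widetilde\tau)^d-1}{\widetilde\tau}$ produces exactly the $\widetilde\varepsilon/\widetilde\tau$ prefactor---there is no ambiguity once you note that here $\widetilde\tau$ multiplies only the sum and not $\widetilde\varepsilon$ (unlike the $\tau$ in Theorem~\ref{thm : deterministic_bound}, which multiplied both).
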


\begin{proof}
Since
\[
\mathcal{A}-\mathcal{A}\times_{k=1}^d \widetilde{\mathcal{P}}_k = \sum_{k=1}^d \mathcal{A}\times_{i=1}^{k-1}\widetilde{\mathcal{P}}_i-\mathcal{A}\times_{i=1}^k \widetilde{\mathcal{P}}_i,
\]
by the subadditivity of the Frobenius norm, we can write
\[
\|\mathcal{A}-\mathcal{A}\times_{k=1}^d \widetilde{\mathcal{P}}_k\|_F \leq \sum_{k=1}^d\|\mathcal{A}\times_{i=1}^{k-1}\widetilde{\mathcal{P}}_i-\mathcal{A}\times_{i=1}^k \widetilde{\mathcal{P}}_i\|_F.
\]
Observe that $E^{(1)}_{SGN}:=\|\mathcal{A}-\mathcal{A}\times_1 \widetilde{\mathcal{P}}_1\|_F$ is exactly the error of the SGN approximant of $\mathcal{A}_1$ with sketches $X_1$ and $Y_1$.
For $k\geq 2$, let $(\mathcal{A}\times_{i=1}^{k-1} \widetilde{\mathcal{P}}_i)_k := \mathcal{A}_k \widetilde{B}_k$, with $\widetilde{B}_k = I\otimes \dots \otimes  I \otimes \widetilde{\mathcal{P}}_{k-1}^T\otimes \dots \otimes \widetilde{\mathcal{P}}_1^T$.
We have
\begin{align*}
\|\mathcal{A}\times_{i=1}^{k-1}\widetilde{\mathcal{P}}_i- \mathcal{A}\times_{i=1}^{k}\widetilde{\mathcal{P}}_i\|_F &=  \|\mathcal{A}_k\widetilde{B}_k-\widetilde{\mathcal{P}}_k\mathcal{A}_k \widetilde{B}_k\|_F = \|(I-\widetilde{\mathcal{P}}_k)\mathcal{A}_k \widetilde{B}_k\|_F\\
&  = \|(I-\widetilde{\mathcal{P}}_k)(\mathcal{A}_k - \mathcal{A}_k + \mathcal{A}_k \widetilde{B}_k)\|_F\\
& \leq \|(I-\widetilde{\mathcal{P}}_k)\mathcal{A}_k\|_F + \|(I-\widetilde{\mathcal{P}}_k) (\mathcal{A}_k -\mathcal{A}_k \widetilde{B}_k)\|_F.
\end{align*}
Again, $E^{(k)}_{SGN} :=\|(I-\widetilde{\mathcal{P}}_k)\mathcal{A}_k\|_F$ is the error of the SGN approximant $\mathcal{A}_k$ with sketches $X_k$ and $Y_k$.

It remains to bound the term $\|(I-\widetilde{\mathcal{P}}_k) (\mathcal{A}_k -\mathcal{A}_k \widetilde{B}_k)\|_F$.
We can write the following chain of inequalities:
\begin{align*}
    \|(I-\widetilde{\mathcal{P}}_k) (\mathcal{A}_k -\mathcal{A}_k \widetilde{B}_k)\|_F &\leq \| I-\widetilde{\mathcal{P}}_k\|_2 \|\mathcal{A}_k -\mathcal{A}_k \widetilde{B}_k\|_F 
    \leq (1+\|\widetilde{\mathcal{P}}_k\|_2) \|\mathcal{A}_k -\mathcal{A}_k \widetilde{B}_k\|_F \\
    &= \widetilde \tau_k\|\mathcal{A}_k -\mathcal{A}_k \widetilde{B}_k\|_F = \widetilde \tau_k \|\mathcal{A}- \mathcal{A}\times_{i=1}^{k-1}\widetilde{\mathcal{P}}_i\|_F \\
    &\leq \widetilde \tau_k \sum_{s=1}^{k-1} \|\mathcal{A}\times_{i=1}^{s-1}\widetilde{\mathcal{P}}_i-\mathcal{A}\times_{i=1}^{s} \widetilde{\mathcal{P}}_i\|_F.
\end{align*}
Summarizing, denoting with 
$E^{(k)}:= \|\mathcal{A} \times_{i=1}^{k-1} \widetilde{\mathcal{P}}_i - \mathcal{A}\times_{i=1}^{k} \widetilde{\mathcal{P}}_i\|_F$, 
we can write the following recurrence relation for an upper bound to the approximation error:
 \[
 \begin{cases}
     E^{(1)} = E^{(1)}_{SGN}\leq \widetilde\varepsilon\\
     E^{(k)} \leq E_{SGN}^{(k)} + \widetilde \tau_k \sum_{i=1}^{k-1}E^{(i)}\leq \widetilde\varepsilon +  \widetilde \tau\sum_{i=1}^{k-1}E^{(i)}
 \end{cases}
\]
The latter system of inequalities is similar to that in \eqref{eq:triangular system} and analogous steps lead to the sought bound.
\end{proof}

The structure of the bound for SMLN is similar to the 
one of MLN. However, in this second case $\widetilde\tau_k$ depends on the norm of $\widetilde{\mathcal{P}}_k$,
which in general can be not negligible
(for instance it grows as $\mathcal{O}(\sqrt{n_k})$ in the Gaussian case \cite{nakatsukasa2020fast}). 

In practice, the proof of Theorem~\ref{SMLN_stability} can be modified 
to obtain a sharper bound. 
We avoided this change in Theorem~\ref{SMLN_stability} for the sake
of clarity, but we give some details here. The key idea is to modify the bound for $\| (I - \widetilde{\mathcal P}_k)(\mathcal{A}_k -\mathcal{A}_k \widetilde{B}_k)\|_F$ in a way that
changes the $\mathcal O(\sqrt{n_k})$ term into a $\mathcal O(\sqrt{r_k})$. 
We may write
\[
\|(I-\widetilde{\mathcal{P}}_k) (\mathcal{A}_k -\mathcal{A}_k \widetilde{B}_k)\|_F \leq \|\mathcal{A}_k -\mathcal{A}_k \widetilde{B}_k\|_F + \|\widetilde{\mathcal{P}}_k(\mathcal{A}_k -\mathcal{A}_k \widetilde{B}_k)\|_F.
\]
The second term in the formula satisfies
 \begin{align*}
     \|\widetilde{\mathcal{P}}_k (\mathcal{A}_k -\mathcal{A}_k \widetilde{B}_k)\|_F = \|\mathcal{A}_k X_k (Y^T_k \mathcal{A}_k X_k)^\dagger_\epsilon Y^T_k (\mathcal{A}_k -\mathcal{A}_k \widetilde{B}_k)\|_F\\
     \leq \|\mathcal{A}_k X_k (Y^T_k \mathcal{A}_k X_k)_\epsilon^\dagger\|_2 \| Y^T_k (\mathcal{A}_k -\mathcal{A}_k \widetilde{B}_k)\|_F.
 \end{align*}
We refer the reader to \cite[Theorem 3.3]{nakatsukasa2020fast} for a 
detailed discussion on how to proceed from here to derive a sharper 
bound, since the remaining steps 
coincide for $d = 2$ or $d > 2$. 
At a high level, we need to check two facts:
the term $\|\mathcal{A}_k X_k (Y^T_k \mathcal{A}_k X_k)_\epsilon^\dagger\|_2$ is $\mathcal{O}(1)$, while $\| Y^T_k (\mathcal{A}_k -\mathcal{A}_k \widetilde{B}_k)\|_F$ is $\mathcal{O}(\sqrt{r_k})   \|\mathcal{A}_k -\mathcal{A}_k \widetilde{B}_k\|_F$. The proof requires Gaussianity of $X_k$, $Y_k$.
However in practical applications
any class of random matrices such that the entries are $\mathcal O(1)$ and a rectangular realization is well-conditioned would work well, including the SRFT and SRHT matrices.

\subsection{Stability of SMLN}
\label{sec:stability}

So far we have analyzed the accuracy of MLN and SMLN without taking into account roundoff errors in floating-point arithmetic. In this section, we address this potential issue.

Regarding the instability of MLN, the situation is similar to that of GN
analyzed in \cite{nakatsukasa2020fast}: stability cannot be established, but the instability is usually benign and one obtains satisfactory results. In this section, we establish the numerical stability of SMLN.

Consider the stabilized MLN approximation 
$(\mathcal{A}\times_{k=1}^d Y_k^T) \times_{k=1}^d \mathcal{A}_k X_k (Y_k^T \mathcal{A}_k X_k)_\epsilon^{\dagger}$ obtained in finite precision-arithmetic. We assume to compute each Tucker factor
$\widetilde{W}_k :=\mathcal{A}_k X_k (Y_k^T \mathcal{A}_k X_k)_\epsilon^{\dagger}$ and $\mathcal{C} =\mathcal{A}\times_{k=1}^d Y_k^T$ separately. Then, we compare the floating representation with the original tensor $\mathcal{A}: \|\mathcal{A}-\mathcal{C}\times_{k=1}^d \fl(\widetilde{W}_k)\|_F$, where we used the 
notation $\fl(\cdot)$ to denote the outcome of an arithmetic operation in 
floating point arithmetic. 

As we have anticipated in Section~\ref{sec:accuracy of SMLN}, and in line with the analysis in \cite{nakatsukasa2020fast}, 
we will assume that each row of $\widetilde{W}_k$ is computed by a backward stable underdetermined linear solver and that $\mathcal A_k X_k$ and $Y_k^T \mathcal A_k X_k$ are computed exactly.

We also borrow some notation from \cite{nakatsukasa2020fast}: we use $\OO(1)$ to suppress terms involving dimensions of the problem or the ranks (like $n_k, r_k$),  but not $1/\epsilon, \sigma_r^{-1}(\mathcal{A}_k)$. We use $\epsilon_{*}$ to denote either a tensor, a matrix, or a scalar such that $\|\epsilon_{*}\|_F = \OO(u \|A\|_F)$. The precise value of $\epsilon_{*}$ may change from appearance to appearance. See \cite{nakatsukasa2020fast} and 
\cite{common_practice} for the motivation behind this notation, which is standard practice in stability analysis.

Let us denote with $[M]_i$ the $i$th row of $M$. The following
proofs are heavily based on the results from \cite{nakatsukasa2020fast}
in the matrix case, for which we give precise references. 

\begin{lemma} \label{lemma: floating MP-inverse}

Let $\mathcal{A}$, $X_k$, $Y_k$ such that $X_k,Y_k$ are Gaussian, $\mathcal{A}_k X_k$ is full column-rank and $Y_k^T\mathcal{A}_k X_k$ is tall. Suppose also that $\epsilon = \OO(u \|A\|_F)$ and that each row of $\mathcal{A}_k X_k (Y_k^T\mathcal{A}_k X_k)_\epsilon^\dagger$ is computed by a backward stable underdetermined linear solver. Then with an exponentially high probability
\begin{equation}
    \|\fl(\mathcal{A}_k X_k (Y_k^T\mathcal{A}_k X_k)_\epsilon^\dagger)\|_F \sim \OO(1).
\end{equation}
\end{lemma}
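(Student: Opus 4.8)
The plan is to bound the Frobenius norm of the computed Tucker factor
$\fl(\widetilde W_k)$, where $\widetilde W_k = \mathcal A_k X_k (Y_k^T \mathcal A_k X_k)_\epsilon^\dagger$, by treating the computation row by row. Since we assume $\mathcal A_k X_k$ and $Y_k^T \mathcal A_k X_k$ are computed exactly and each row of $\widetilde W_k$ is obtained from a backward stable underdetermined linear solver, I would first express the $i$th row $[\widetilde W_k]_i$ as the minimum-norm solution of the underdetermined least-squares problem associated with the (tall) matrix $Y_k^T \mathcal A_k X_k$ and right-hand side $[\mathcal A_k X_k]_i$. Backward stability then guarantees that the computed row $\fl([\widetilde W_k]_i)$ is the exact min-norm solution of a nearby problem, with perturbations of size $\OO(u)$ relative to the data; this is precisely the matrix-case setup analyzed in \cite{nakatsukasa2020fast}.

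The key quantitative step is to control $\|\fl([\widetilde W_k]_i)\|_2$ uniformly over the rows. I would argue that, because of the $\epsilon$-regularization, the effective pseudoinverse $(Y_k^T \mathcal A_k X_k)_\epsilon^\dagger$ has spectral norm at most $1/\epsilon$, so the backward error of size $\OO(u\|\mathcal A_k X_k\|_F) = \OO(u\|A\|_F) = \epsilon \cdot \OO(1)$ translates (via standard min-norm-solution perturbation bounds) into a row-norm perturbation that remains $\OO(1)$ rather than blowing up. The Gaussianity of $X_k$ and $Y_k$ enters here to ensure that $Y_k^T \mathcal A_k X_k$, being tall and rectangular, is well-conditioned on its retained singular subspace with exponentially high probability, and that the norms $\|\mathcal A_k X_k\|_2$ and $\|(Y_k^T \mathcal A_k X_k)_\epsilon^\dagger \mathcal A_k X_k\|$-type quantities are $\OO(1)$ with the same probability. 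I would then sum the squared row norms over the $n_k$ rows and absorb the resulting $\sqrt{n_k}$ factor into the $\OO(1)$ notation, which by definition suppresses polynomial dependence on $n_k$ and $r_k$, yielding $\|\fl(\widetilde W_k)\|_F \sim \OO(1)$.

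The main obstacle I anticipate is making precise the claim that the $\epsilon$-pseudoinverse tames the backward error: one must show that the min-norm solution of the perturbed underdetermined system does not have its norm amplified by the small singular values that were truncated away. This is the crux of the stabilization idea, and it is where the difference between $(\cdot)^\dagger$ and $(\cdot)_\epsilon^\dagger$ matters; the unregularized pseudoinverse could amplify the $\OO(u)$ perturbation by $1/\sigma_{\min}$, which is not controlled, whereas the regularized version caps this at $1/\epsilon$. Rather than reprove the underlying matrix result, I would reduce the statement to the corresponding matrix analysis in \cite{nakatsukasa2020fast} (specifically the argument establishing stability of the SGN factor), observing that the quantity $\fl(\mathcal A_k X_k (Y_k^T \mathcal A_k X_k)_\epsilon^\dagger)$ is formally identical to the matrix SGN factor applied to the matricization $\mathcal A_k$ with sketches $X_k, Y_k$. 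The only genuinely new ingredient is checking that the tensor-induced dimensions (the $\prod_{i\neq k} n_i$ columns of $X_k$) do not disturb the probabilistic conditioning estimates, which follows because those estimates depend only on the Gaussianity and the aspect ratios, both of which are preserved under matricization.
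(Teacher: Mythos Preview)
Your approach is essentially the paper's: reduce to the matrix SGN stability analysis in \cite{nakatsukasa2020fast}, treating the computation row by row via backward stability of the underdetermined solver, and then absorb the dimensional factor into $\OO(1)$. One point to correct: it is \emph{not} the case that $Y_k^T\mathcal{A}_kX_k$ is well-conditioned --- its conditioning inherits that of $\mathcal{A}_k$, which may be arbitrary, and indeed this is precisely why the $\epsilon$-pseudoinverse is needed. The well-conditioned object that the Gaussianity of $Y_k$ buys you is $Y_k^TU$, where $U=\mathrm{orth}(\mathcal{A}_kX_k)$; the paper's key algebraic step writes $\mathcal{A}_kX_k = U\Sigma V^T$ and then $\Sigma V^T = (Y_k^TU)^\dagger(Y_k^T\mathcal{A}_kX_k)$, so that $\mathcal{A}_kX_k(Y_k^T\mathcal{A}_kX_k+\epsilon_*)_\epsilon^\dagger$ is controlled by $\|(Y_k^TU)^\dagger\|_2\cdot\|(Y_k^T\mathcal{A}_kX_k)(Y_k^T\mathcal{A}_kX_k+\epsilon_*)_\epsilon^\dagger\|_2 = \OO(1)\cdot\OO(1)$. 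Your deferral to \cite{nakatsukasa2020fast} would pick this up, so the plan goes through, but the intuition you stated for where Gaussianity enters is off.
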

\begin{proof}
We can assume, without loss of generality, to perform a preliminary scaling to have 
$\|\mathcal{A}_k\|_F = 1$.
Following the proof of \cite[Theorem 4.1]{nakatsukasa2020fast} we know that 
\begin{align} \label{eq:square solver}
\|[\fl(\mathcal{A}_k X_k (Y_k^T\mathcal{A}_k X_k)_\epsilon^\dagger)]_i\|_2 &= \|[\mathcal{A}_k X_k + \epsilon_*]_i(Y_k^T\mathcal{A}_k X_k + \epsilon_*)^\dagger_\epsilon \|_2 \\
&\leq \|[\mathcal{A}_k X_k]_i(Y_k^T\mathcal{A}_k X_k +\epsilon_*)^\dagger_\epsilon\|_2 + \OO(1).
\end{align}
Let $U\Sigma V^T$ be the SVD of $\mathcal{A}_k X_k$; we have
\begin{align*}
\|\mathcal{A}_k X_k(Y_k^T\mathcal{A}_k X_k +\epsilon_*)^\dagger_\epsilon\|_2 &= \|\Sigma V^T(Y_k^T\mathcal{A}_k X_k +\epsilon_*)^\dagger_\epsilon\|_2 \\
& = \|(Y_k^T U)^\dagger (Y_k^T\mathcal{A}_k X_k)(Y_k^T\mathcal{A}_k X_k +\epsilon_*)^\dagger_\epsilon\|_2.    
\end{align*}
Let us denote by $\Xi_i$ the error matrix in the above 
expression: 
we fix $(Y_k^T\mathcal{A}_k X_k +\epsilon_*)^\dagger_\epsilon = (Y_k^T\mathcal{A}_k X_k +\Xi_i)^\dagger_\epsilon$, and we have $\| \Xi_i \|_2 \sim \OO(u)$, 
Then, 
\begin{align*}
\|\mathcal{A}_k X_k(Y_k^T\mathcal{A}_k X_k +\epsilon_*)^\dagger_\epsilon\|_2 &\leq 
\|(Y_k^T U)^\dagger\|_2 \|(Y_k^T\mathcal{A}_k X_k+\Xi + \epsilon_*)(Y_k^T\mathcal{A}_k X_k +\Xi)^\dagger_\epsilon\|_2\\
& \leq \|(Y_k^T U)^\dagger\|_2 \|(Y_k^T\mathcal{A}_k X_k+\Xi_i)(Y_k^T\mathcal{A}_k X_k +\Xi_i)^\dagger_\epsilon\|_2 \\
&+ \|(Y_k^T U)^\dagger\|_2\|\epsilon_* (Y_k^T\mathcal{A}_k X_k +\epsilon_*)^\dagger_\epsilon\|_2 \sim  \OO(1).
\end{align*}
In the last equality we used that $\|(Y_k^T U)^\dagger\|_2\sim \OO(1)$, which follows from the fact that $Y_k^T U$ is tall-Gaussian, hence well-conditioned, that $\|(Y_k^T\mathcal{A}_k X_k+\Xi_i)(Y_k^T\mathcal{A}_k X_k +\Xi_i)^\dagger_\epsilon\|_2 = 1$ and that $\|\epsilon_* (Y_k^T\mathcal{A}_k X_k +\Xi_i)^\dagger_\epsilon\|_2\leq \|\epsilon_*\|_2/\epsilon \sim \OO(1)$.

Then, the rows of the computed matrix satisfy $\|[\fl(\mathcal{A}_k X_k (Y_k^T\mathcal{A}_k X_k)_\epsilon^\dagger)]_i\|_2 = \OO(1)$. As a consequence, also the Frobenius norm of the computed matrix is $\OO(1)$.
\end{proof}

We are now ready to prove the main stability result.

\begin{thm}
 \label{theorem: stability}
   Let the assumptions in Lemma \ref{lemma: floating MP-inverse} be satisfied and suppose to form $\mathcal{C}\times_{k=1}^d \widetilde{W}_k = (\mathcal{A}\times_{k=1}^d Y_k^T)\times_{k=1}^d \mathcal{A}_k X_k (Y_k^T\mathcal{A}_k X_k)_\epsilon^\dagger$ by first computing the core tensor $\mathcal{C}$ and the matrices $\widetilde{W}_k$ and then performing the mode products between $\mathcal{C}$ and the $\widetilde{W}_k$. 
   Set
  \[
   \widetilde{\varepsilon}_{k}:= \|\mathcal{A}- \mathcal{A}\times_k 
   \fl(\widetilde{W}_k) Y_k^T) \|_F \qquad and \qquad \widetilde{\tau}_k:= 1 + \|\fl(\widetilde{W}_k)\|_2\|Y_k^T\|_2.
  \]
  Then, denoting with $\widetilde\varepsilon:= \max_k\widetilde \varepsilon_k$ and $\widetilde\tau:= \max_k \widetilde\tau_k$, we have
    \[
    \|\mathcal{A}-\mathcal{C}\times_{k=1}^d \fl(\widetilde{W}_k)\|_F\leq  \frac{\widetilde\varepsilon}{\widetilde\tau}((1+\widetilde\tau)^d-1).
    \]

\end{thm}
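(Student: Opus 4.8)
The plan is to reduce the finite-precision bound to the exact-arithmetic argument already carried out in Theorem~\ref{SMLN_stability}. The first step is to observe that, under the stated assumptions (the core $\mathcal{C} = \mathcal{A}\times_{k=1}^d Y_k^T$ and the outer mode products carried out exactly, so that the only inexactness sits in the computed factors $\fl(\widetilde{W}_k)$), the computed tensor can be written in the compact form $\mathcal{C}\times_{k=1}^d \fl(\widetilde{W}_k) = \mathcal{A}\times_{k=1}^d \mathcal{Q}_k$, where $\mathcal{Q}_k := \fl(\widetilde{W}_k) Y_k^T$. This follows from the composition rule $\mathcal{A}\times_k Y_k^T \times_k \fl(\widetilde{W}_k) = \mathcal{A}\times_k (\fl(\widetilde{W}_k) Y_k^T)$ and the commutativity of mode products along distinct modes. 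The matrices $\mathcal{Q}_k$ play the role of the exact projections $\widetilde{\mathcal{P}}_k$ of Theorem~\ref{SMLN_stability}: they need not be genuine idempotents, but the argument there never uses idempotency, only the operator-norm estimate $\|I - \mathcal{Q}_k\|_2 \leq 1 + \|\mathcal{Q}_k\|_2$.

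With this identification the proof proceeds essentially verbatim as in Theorem~\ref{SMLN_stability}. I would expand $\mathcal{A} - \mathcal{A}\times_{k=1}^d \mathcal{Q}_k$ as the telescoping sum $\sum_{k=1}^d (\mathcal{A}\times_{i=1}^{k-1}\mathcal{Q}_i - \mathcal{A}\times_{i=1}^{k}\mathcal{Q}_i)$ and apply subadditivity of $\|\cdot\|_F$. Writing $(\mathcal{A}\times_{i=1}^{k-1}\mathcal{Q}_i)_k = \mathcal{A}_k \widetilde{B}_k$ with $\widetilde{B}_k = I\otimes\cdots\otimes I\otimes \mathcal{Q}_{k-1}^T\otimes\cdots\otimes \mathcal{Q}_1^T$, each summand becomes $\|(I-\mathcal{Q}_k)\mathcal{A}_k\widetilde{B}_k\|_F$, which I split as $\|(I-\mathcal{Q}_k)\mathcal{A}_k\|_F + \|(I-\mathcal{Q}_k)(\mathcal{A}_k - \mathcal{A}_k\widetilde{B}_k)\|_F$. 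The first term is exactly $\widetilde{\varepsilon}_k = \|\mathcal{A} - \mathcal{A}\times_k \mathcal{Q}_k\|_F$, the floating-point SGN error $\|E_{SGN}^{(k)}\|_F$ of the matricization $\mathcal{A}_k$, by the definition in the statement. For the second term I would use $\|(I-\mathcal{Q}_k)(\mathcal{A}_k - \mathcal{A}_k\widetilde{B}_k)\|_F \leq (1+\|\mathcal{Q}_k\|_2)\|\mathcal{A}_k - \mathcal{A}_k\widetilde{B}_k\|_F$, and then bound $\|\mathcal{Q}_k\|_2 = \|\fl(\widetilde{W}_k)Y_k^T\|_2 \leq \|\fl(\widetilde{W}_k)\|_2\|Y_k^T\|_2$, so that $1+\|\mathcal{Q}_k\|_2 \leq \widetilde{\tau}_k$, precisely matching the definition of $\widetilde{\tau}_k$ in the statement.

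Since $\|\mathcal{A}_k - \mathcal{A}_k\widetilde{B}_k\|_F = \|\mathcal{A} - \mathcal{A}\times_{i=1}^{k-1}\mathcal{Q}_i\|_F$ is itself bounded, again by telescoping and subadditivity, by $\sum_{s=1}^{k-1}\|\mathcal{A}\times_{i=1}^{s-1}\mathcal{Q}_i - \mathcal{A}\times_{i=1}^{s}\mathcal{Q}_i\|_F$, I obtain the recurrence $E^{(k)}\leq \widetilde{\varepsilon} + \widetilde{\tau}\sum_{i=1}^{k-1}E^{(i)}$ for $E^{(k)} := \|\mathcal{A}\times_{i=1}^{k-1}\mathcal{Q}_i - \mathcal{A}\times_{i=1}^{k}\mathcal{Q}_i\|_F$. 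This is the same system of inequalities as in \eqref{eq:triangular system}, and multiplying by the non-negative inverse $T_d^{-1}$ yields $\sum_{k=1}^d E^{(k)} \leq \frac{\widetilde{\varepsilon}}{\widetilde{\tau}}((1+\widetilde{\tau})^d - 1)$, which is the claim.

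The genuinely new content is not in this inequality, which is a near-verbatim repeat of Theorem~\ref{SMLN_stability}, but in ensuring the bound is meaningful, namely that $\widetilde{\tau}$ does not blow up as the unit roundoff shrinks. The main obstacle is therefore controlling $\|\fl(\widetilde{W}_k)\|_2$, and this is exactly what Lemma~\ref{lemma: floating MP-inverse} supplies: under Gaussian sketches and a backward stable underdetermined solver one has $\|\fl(\widetilde{W}_k)\|_F = \OO(1)$ with exponentially high probability, hence $\widetilde{\tau}_k = 1 + \OO(1)\|Y_k^T\|_2$ stays bounded (up to the benign $\|Y_k^T\|_2 = \OO(\sqrt{n_k})$ factor). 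A secondary subtlety worth verifying is the initial reduction: one must confirm that folding all roundoff into the factors $\fl(\widetilde{W}_k)$, while treating $\mathcal{C}$ and the outer mode products as exact, is consistent with the assumption that each row of $\widetilde{W}_k$ is computed by a backward stable solver, so that $\widetilde{\varepsilon}_k$ genuinely coincides with the floating-point SGN error for $\mathcal{A}_k$.
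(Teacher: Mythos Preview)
Your proposal is correct and follows essentially the same route as the paper's proof: both rewrite the computed approximant as $\mathcal{A}\times_{k=1}^d \fl(\widetilde W_k)Y_k^T$, apply the telescoping expansion, split each term via $(I-\mathcal{Q}_k)\mathcal{A}_k\widetilde B_k = (I-\mathcal{Q}_k)\mathcal{A}_k + (I-\mathcal{Q}_k)(\mathcal{A}_k\widetilde B_k-\mathcal{A}_k)$, and feed the resulting recurrence into the triangular-system argument of \eqref{eq:triangular system}. Your remark that idempotency of $\mathcal{Q}_k$ is never used, and that the role of Lemma~\ref{lemma: floating MP-inverse} is precisely to keep $\widetilde\tau$ bounded, matches the paper's discussion.
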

\begin{proof}
The error of the approximation, in view of 
the subadditivity of the Frobenius norm satisfies
\begin{align*}
\|\mathcal{A}-\mathcal{C}\times_{k=1}^d \fl(\widetilde{W}_k)\|_F&=
\| \mathcal{A}-\mathcal{A}\times_{k=1}^d \fl(\widetilde{W}_k)Y_k^T\|_F\\ &\leq \sum_{k=1}^d \|\mathcal{A}\times_{i=1}^{k-1}\fl(\widetilde{W}_i)Y_i^T- \mathcal{A}\times_{i=1}^k \fl(\widetilde{W}_i)Y_i^T\|_F.
\end{align*}
The Frobenius norm is invariant under matricization, so it suffices 
to bound terms of the form $\|(I-\fl(\widetilde{W}_k)Y_k^T)\mathcal{A}_kB_k\|_F$ for $k=1,\dots, d$, where the matrices $B_k$ are given by
$B_k := (I\otimes\dots \otimes I\otimes \fl(\widetilde{W}_{k-1})Y_{k-1}^T\otimes \dots \otimes \fl(\widetilde{W}_{1})Y_{1}^T)^T$.
Then, 
\begin{align*}
&\|(I-\fl(\widetilde{W}_k)Y_k^T)\mathcal{A}_k B_k\|_F = \|(I-\fl(\widetilde{W}_k)Y_k^T)(\mathcal{A}_k + \mathcal{A}_k B_k - \mathcal{A}_k) \|_F \\
&\qquad \leq \|(I-\fl(\widetilde{W}_k)Y_k^T)\mathcal{A}_k\|_F + \|(I-\fl(\widetilde{W}_k)Y_k^T)(\mathcal{A}_k- \mathcal{A}_k B_k) \|_F\\
&\qquad \leq \|\mathcal{A}- \mathcal{A}\times_k \fl(\widetilde{W}_k)Y_k^T \|_F+ (1+\|\fl(\widetilde{W}_k)\|_2\|Y_k^T\|_2)\|\mathcal{A}_k - \mathcal{A}_k B_k\|_F.\\
&\qquad \leq \|\mathcal{A}_k- \fl(\mathcal{A}_k X_k (Y^T\mathcal{A}_k X_k)^\dagger_\epsilon) Y_k^T\mathcal{A}_k\|_F + \widetilde{\tau}_k\|\mathcal{A}_k - \mathcal{A}_k B_k\|_F\\
&\qquad \leq \widetilde{\varepsilon}_k +\widetilde{\tau_k}\sum_{k=1}^d \|\mathcal{A}\times_{i=1}^{k-1}\fl(\widetilde{W}_i)Y_i^T- \mathcal{A}\times_{i=1}^k \fl(\widetilde{W}_i)Y_i^T\|_F.
\end{align*}

We obtained a recurrence inequality with the same structure encountered in the proof of Theorem \ref{SMLN_stability}. Thus, similar steps lead to the sought inequality.
\end{proof}
We emphasize that the parameter $\widetilde{\varepsilon}_k=\|\mathcal{A}- \mathcal{A}\times_k \fl(\widetilde{W}_k)Y_k^T \|_F$ is the error of approximation of the SGN method in floating point arithmetic performed on $\mathcal{A}_k$ with sketch matrices $X_k, Y_k$ and has been extensively analyzed in \cite[Section 4.1]{nakatsukasa2020fast}. The parameter $\widetilde{\tau}_k$ is bounded thanks to Lemma~\ref{lemma: floating MP-inverse}. To facilitate the understanding of the proof, we set $\widetilde{\tau}_k$ equal to $1 + \|\fl(\widetilde{Q}_k)\|_2\|Y_k^T\|_2$; however a sharper estimate may be obtained following the strategies in \cite[Section 4.1]{nakatsukasa2020fast}.
Again, the Gaussian hypothesis is not strictly necessary, and any class of random matrices such that the entries are $\mathcal O(1)$ and a rectangular realization is well-conditioned would work well.

\section{Sketch selection for MLN}

An important aspect of MLN is the choice of the sketch matrices. 
Indeed, in the deterministic bound for MLN \eqref{thm : deterministic_bound} conditioning terms that depend on the probability distribution of the sketching appear.

If the tensor $\mathcal{A}$ is not structured, several options are available: Gaussian, SRHT, SRFT, DCT, just to name a few. 
We recommend the recent survey \cite{randomization_advantajes} for an excellent overview of randomized algorithms in numerical linear algebra.

When $\mathcal{A}$ is structured instead, specific DRM should be used.
Of particular interest is the case where the tensor $\mathcal{A}$ is given in Tucker format, that is $\mathcal{A}= \mathcal{C}\times_{i=1}^d U_i$, where $\mathcal{C}\in\mathbb{R}^{k_1\times \dots \times k_d}$ is a small tensor and the $U_i$ have size $n_i\times k_i$ and we would compress it to obtain a Tucker tensor of smaller dimensions.

In this case, the flattening along the $k$th index is given by $U_k\mathcal{C}_k \Ukk^T$ and an appropriate right DRM could exploit the Kronecker structure of $\Ukk$ to accelerate the sketching procedure.
For example, one may choose as sketching matrix a Kronecker product of small independent DRMs $\Omega_i\in \mathbb{R}^{n_i\times r_i}$, $r_i$, like the ones described for the unstructured case, in order to compute $\Ukk \Omegakk$ in a very cheap way.

This strategy has the drawback that to obtain a satisfactory approximation of a matrix with rank $r$ with high probability, each $\Omega_i$ should have at least $r$ columns, for a total of $r^d$ columns. Another possibility, which is what we suggest, is to use a random selection of columns from the Kronecker product of the $\Omega_i$. 
These types of sketchings have been extensively analyzed in the literature; in particular, in \cite{Kolda} KFJLT are proposed. KFJLTs drastically reduce the embedding cost to an exponential factor of the standard fast Johnson-Lindenstrauss transform (FJLT)’s cost when applied to vectors with Kronecker structure and, above all, the computational gain comes with only a small price in embedding power.
A related strategy is selecting the right DRMs with Khatri-Rao structure, as done in \cite{R-ST-HOSVD} in a similar context.

Our results allow us to analyze the accuracy attained from the approximation 
methods obtained by a specific choice of sketchings $Y_k, X_k$ 
in terms of the singular values and the condition number 
of the matrices $Y_k^T Q_k$, with an orthogonal $Q_k$ 
(see Theorem~\ref{thm : deterministic_bound}). 

\section{Experiments}

The aim of this section is to illustrate the performances of MLN and SMLN and to show that the theoretical bounds provided in Theorem $\ref{thm : deterministic_bound}$ are sharp.
The code used for the numerical experiments is available at \url{https://github.com/alb95/MLN}.

\subsection*{Implementation}

In our implementation of (S)MLN the core tensor $\mathcal{A}\times_{k=1}^d Y_k^T$ and the matrices $\mathcal{A}_k X_k (Y_k^T\mathcal{A}_k X_k)^{\dagger}$ are computed separately. Regarding the implementation of the pseudo-inverse and of the $\epsilon$-truncated pseudo-inverse, see \cite[Section 5]{nakatsukasa2020fast}.

\subsection*{Numerical illustration}

In the experiments, we will compare the performances of MLN and SMLN methods with other popular methods for the low-rank tensor approximation in Tucker format. In particular, we will compare the method with the truncated higher-order singular value decomposition (HOSVD), the randomized HOSVD (RHOSVD), and the randomized sequential truncated HOSVD (RSTHOSVD). 

All numerical experiments were performed in MATLAB version 2023b on a laptop with 16GB of system memory and 
the tensor operations (mode-$k$ products, unfoldings, and 
the computation of the high-order SVD) have been performed 
by means of the Tensor Toolbox for MATLAB v3.5 \cite{tensor_toolbox} and the \texttt{tensorprod} function in MATLAB.

We recall that in the HOSVD method the SVD of each of the $d$ matricizations of the tensor $\mathcal{A}$ is computed, i.e. $U_k S_k V_k^T = \mathcal{A}_k$, and then, set $r = (r_1, \dots, r_d)$ as the desired multilinear rank of the approximant, the tensor $\hat{\mathcal{A}} = (\mathcal{A}\times_{k=1}^d U_k^{(r_k)T})\times_{k=1}^d U_k^{(r_k)}$ is formed; where $U_k^{(r_k)}$ are the first $r_k$ columns of $U_k$.

The RHOSVD is similar: once the multilinear rank $r$ of the approximant is fixed, we draw $d$ random sketchings $X_k$ of size $\prod_{j\neq k} n_j\times r_k$ and then we compute an economy-size SVD of the matrices $U_k S_k V_k^T = \mathcal{A}_kX_k$. The approximant is given by $\hat{\mathcal{A}} = (\mathcal{A}\times_{k=1}^d U_k^T)\times_{k=1}^d U_k$.

The RSTHOSVD differs from previous methods as it truncates the tensor while processing each mode. More specifically,
if the target multilinear rank $r$ of the approximant is chosen, $d$ random sketchings $X_k$ of size $\left(\prod_{j< k}r_j\right) \left(\prod_{j> k} n_j\right)\times r_k$ are drawn. Then, an economy-size SVD of the matrices $U_k S_k V_k^T = \mathcal{B}^{(k)}_k X_k$ are sequentially computed, where $\mathcal{B}^{(k)}$ denotes the partially truncated core tensor $\mathcal{B}^{(k)} = \mathcal{A}\times_{j=1}^k U_j^T$.
In this case the approximant is given by $\hat{\mathcal{A}} = (\mathcal{A}\times_{k=1}^d U_k^T)\times_{k=1}^d U_k$.

Even if the truncated HOSVD is often prohibitively expensive, it is an important benchmark as it provides an almost optimal Tucker approximant, i.e. if $\hat{\mathcal{A}}$ is the truncated HOSVD and $\mathcal{A}_*$ is the optimal solution to the best low multilinear rank approximation problem, then 
\[
\|\mathcal{A}-\hat{\mathcal{A}}\|_F\leq \sqrt{d}\|\mathcal{A}-\mathcal{A}_*\|_F.
\]

In the majority of the experiment, to describe different decays, we construct the tensors by fixing their CP-decomposition. That is, we fix a sequence of $n$ decreasing positive numbers $\sigma_i$, we generate $d$ matrices $Q_i\in \mathbb{R}^{n\times n}$, where $Q_i$ is a random orthogonal matrix (Q-factor in the QR factorization of a square Gaussian matrix) and we set $\mathcal{T} = \mathcal{S}\times_{i=1}^d Q_i$, where $\mathcal{S}$ is the superdiagonal tensor with the $\sigma_i$ in the superdiagonal.
Notice that in this way the singular values of each matricization are the $\sigma_i$.

Unless explicitly specified, the sketch matrices are assumed to be Gaussian.

In the first experiment, Figure \ref{fig:Stabilization comparison}, we compare the performances of MLN and SMLN. To do so, we test the algorithms on a numerically low-rank tensor $\mathcal{T}$ of size $70\times 70\times 70$ with exponential decay in the $\sigma_i$ of rate $0.1$ (i.e., $\sigma_i = 0.1^i$).

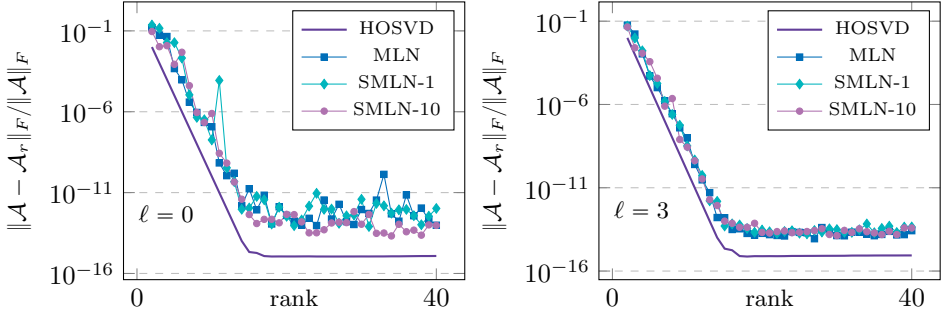
\begin{figure}
\centering\begin{tikzpicture}
\begin{semilogyaxis}[
    x label style={at={(axis description cs:0.5,0)},anchor=north},
    xlabel={\small{rank}},
    ylabel={\small{$\|\mathcal{A}-\mathcal{A}_r\|_F/\|\mathcal{A}\|_F$}},
    legend pos=north west,
    xtick = {0,40},
    ymajorgrids=true,
    grid style=dashed,
    width=.47\linewidth,
    legend pos = north east,
    mark size = 1.2pt,
    restrict x to domain=0:40
]

    \addplot [color = RoyalPurple, style = thick] table [col sep=comma, x index = 0, y index = 1] {E_stabilization_no_oversample.txt};
  \addplot [color = NavyBlue, mark = square*] table [col sep=comma, x index = 0, y index = 2] {E_stabilization_no_oversample.txt};
  \addplot [color = Aquamarine, mark = diamond*, mark size= 1.8pt] table [col sep=comma, x index = 0, y index = 3] {E_stabilization_no_oversample.txt};
  \addplot [color = Orchid, mark = *] table [col sep=comma, x index = 0, y index = 4] {E_stabilization_no_oversample.txt};
\node[anchor=west, rotate=0] at (axis cs:-1,5e-13) {\small{$ \ell=0$}};
  \legend{\scriptsize{HOSVD}, \scriptsize{MLN}, \scriptsize{SMLN-1}, \scriptsize{SMLN-10}}

\end{semilogyaxis}
\end{tikzpicture}~\begin{tikzpicture}
\begin{semilogyaxis}[
    x label style={at={(axis description cs:0.5,0)},anchor=north},
    xlabel={\small{rank}},
    ylabel={\small{$\|\mathcal{A}-\mathcal{A}_r\|_F/\|\mathcal{A}\|_F$}},
    legend pos=north west,
    xtick = {0,40},
    ymajorgrids=true,
    grid style=dashed,
    width=.47\linewidth,
    legend pos = north east,
    mark size = 1.2pt,
    restrict x to domain=0:40
]

     \addplot [color = RoyalPurple, style = thick] table [col sep=comma, x index = 0, y index = 1] {E_stabilization_oversample.txt};
  \addplot [color = NavyBlue, mark = square*] table [col sep=comma, x index = 0, y index = 2] {E_stabilization_oversample.txt};
  \addplot [color = Aquamarine, mark = diamond*, mark size= 1.8pt] table [col sep=comma, x index = 0, y index = 3] {E_stabilization_oversample.txt};
  \addplot [color = Orchid, mark = *] table [col sep=comma, x index = 0, y index = 4] {E_stabilization_oversample.txt};
\node[anchor=west, rotate=0] at (axis cs:-1,5e-13) {\small{$ \ell=3$}};
  \legend{\scriptsize{HOSVD}, \scriptsize{MLN}, \scriptsize{SMLN-1}, \scriptsize{SMLN-10}}
\end{semilogyaxis}
\end{tikzpicture}
\caption{Accuracy of multilinear Nystr\"om (MLN) and stabilized multilinear Nystr\"om with $\epsilon = u\|A\|_F$ (SMLN-1) and $\epsilon = 10u\|A\|_F$ (SMLN-10).}
    \label{fig:Stabilization comparison}
\end{figure}

The tests show that when there is no oversampling SMLN performs better than MLN; but both methods produce unsatisfactory results. Instead, even with a small oversampling ($\ell = 3$), both methods show significant improvement and in practice result equivalent. This confirms what was observed in \cite{nakatsukasa2020fast} for GN: stability cannot be established for plain MLN, but oversampling makes its instability benign, and one usually obtains satisfactory results.

Given that both theoretical and experimental results support oversampling's fundamental impact, we recommend its consistent implementation; moreover, due to the equivalence of results achieved with oversampling by MLN and SMLN, we will not include SMLN in the experiments below.

We notice that oversampling, other than to stabilize MLN, serves to improve its accuracy. Hence, we conduct experiments to determine the optimal choice for the oversampling parameter $\ell$. 

The results are shown in Figure \ref{fig:Oversampling comparison}. As observed for GN in \cite{nakatsukasa2020fast}, MLN with fixed $\ell$ gets further from optimal as $r$ increases, whereas choosing $\ell=cr$ avoids this issue. In particular, the examples show that choosing 
$c = \frac 12$ and therefore $\ell = \frac r2$ yields a 
robust implementation in all cases.

\begin{figure}
\centering\begin{tikzpicture}
\begin{semilogyaxis}[
    x label style={at={(axis description cs:0.5,0)},anchor=north},
    xlabel={\small{rank}},
    ylabel={\small{$\|\mathcal{A}-\mathcal{A}_r\|_F/\|\mathcal{A}\|_F$}},
    legend pos=north east,
    legend style={fill=none,draw=white},
    xtick = {0,90},
    ytick = {1e-14, 1e-6,1e2},
    ymajorgrids=true,
    grid style=dashed,
    width=.47\linewidth,
    mark size = 1.2pt,
    restrict x to domain=0:90
]
  \addplot [color = Green, mark = diamond*, mark size= 2pt] table [col sep=comma, x index = 0, y index = 2] {E_oversampling_exponential.txt};
  \addplot [color = CadetBlue, mark = square*,mark size= 1.3pt] table [col sep=comma, x index = 0, y index = 3] {E_oversampling_exponential.txt};
  \addplot [color = NavyBlue, mark = pentagon*, mark size= 1.8pt] table [col sep=comma, x index = 0, y index = 4] {E_oversampling_exponential.txt};
  \addplot [color = SkyBlue, mark = *, mark size= 1.6pt] table [col sep=comma, x index = 0, y index = 5] {E_oversampling_exponential.txt};
\addplot [color = RoyalPurple, style = thick] table [col sep=comma, x index = 0, y index = 1] {E_oversampling_exponential.txt};
\node[anchor=west, rotate=0] at (axis cs:-1,3e-14) {\small{$ \sigma_i = 0.7^i$}};
\node[anchor=west, rotate=-34] at (axis cs:33,8e-7) {\scriptsize{HOSVD}};
  \legend{\scriptsize{$\ell=0$}, \scriptsize{$\ell=2$}, \scriptsize{$\ell=r/2$}, \scriptsize{$\ell=r$}}
\end{semilogyaxis}
\end{tikzpicture}~\begin{tikzpicture}
\begin{semilogyaxis}[
    x label style={at={(axis description cs:0.5,0)},anchor=north},
    xlabel={\small{rank}},
    ylabel={\small{$\|\mathcal{A}-\mathcal{A}_r\|_F/\|\mathcal{A}\|_F$}},
    legend pos=south east,
    xtick = {0,90},
    ytick = {1e-1,1e2,1e4},
    ymajorgrids=true,
    grid style=dashed,
    width=.47\linewidth,
    mark size = 1.2pt,
    restrict x to domain=0:90
]
    \addplot [color = Green, mark = diamond*, mark size= 2pt] table [col sep=comma, x index = 0, y index = 2] {E_oversampling_linear.txt};
    \addplot [color = CadetBlue, mark = square*,mark size= 1.3pt] table [col sep=comma, x index = 0, y index = 3] {E_oversampling_linear.txt};
    \addplot [color = NavyBlue, mark = pentagon*, mark size= 1.8pt] table [col sep=comma, x index = 0, y index = 4] {E_oversampling_linear.txt};
    \addplot [color = SkyBlue, mark = *, mark size= 1.6pt] table [col sep=comma, x index = 0, y index = 5] {E_oversampling_linear.txt};
    \addplot [color = RoyalPurple, style = thick] table [col sep=comma, x index = 0, y index = 1] {E_oversampling_linear.txt};
    \node[anchor=west, rotate=0] at (axis cs:-1,3e-2) {\small{$ \sigma_i=1/i$}};
\end{semilogyaxis}
\end{tikzpicture}
\caption{Performance comparison of MLN with varying values of the oversampling parameter $\ell$ on two tensors of size $100\times 100\times 100$: one with $\sigma_i = 0.7^i$ (left) and another with $\sigma_i = 1/i$ (right)." }
    \label{fig:Oversampling comparison}
\end{figure}
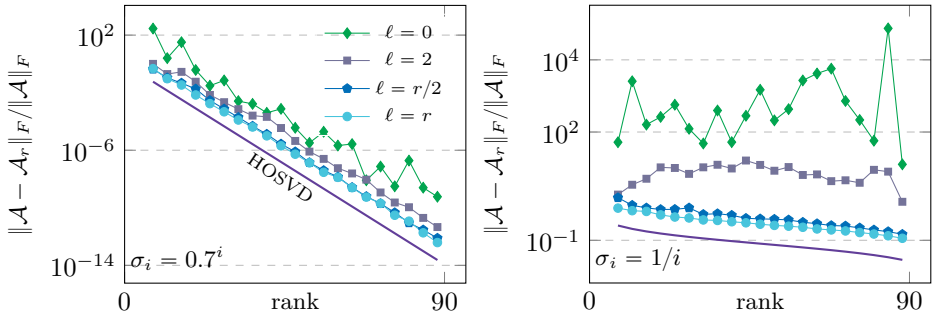

In the upcoming experiment, Figure \ref{fig: various decay comparison}, we compare the performances of MLN (with $\ell=r/2$), RHOSVD, and HOSVD. We test the algorithms on 4 tensors of size $100\times 100\times 100$ with different decays: linear ($\sigma_i = 1/i)$, quadratic ($\sigma_i= 1/i^2)$, cubic ($\sigma_i = 1/i^3$) and exponential ($\sigma_i = 0.5^i$). The plots show that up to a small constant, the accuracy of MLN and RHOSVD are the same and that both achieve near-optimal accuracy.

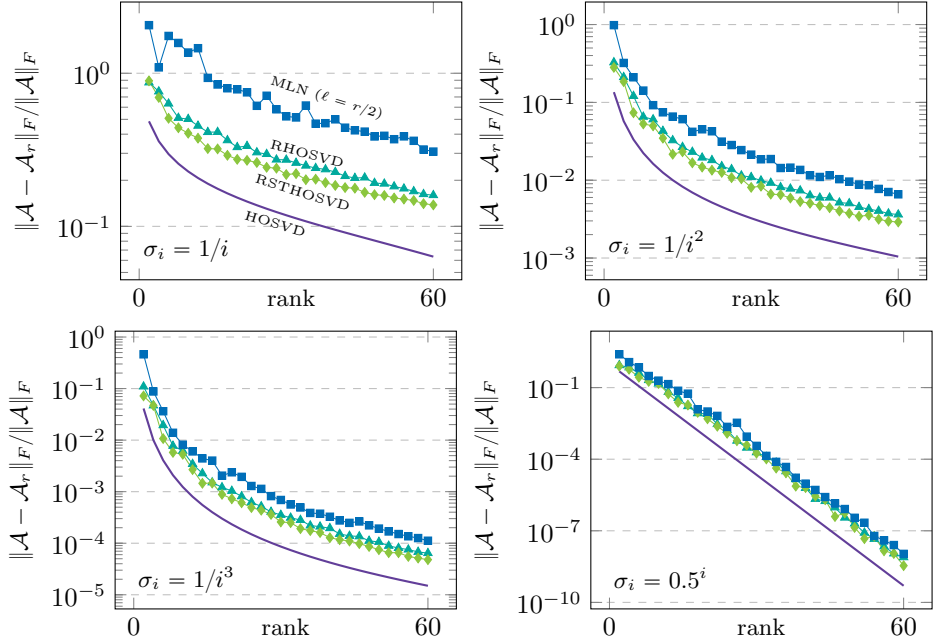
\begin{figure}
\centering\begin{tikzpicture}
\begin{semilogyaxis}[
    x label style={at={(axis description cs:0.5,0)},anchor=north},
    xlabel={\small{rank}},
    ylabel={\small{$\|\mathcal{A}-\mathcal{A}_r\|_F/\|\mathcal{A}\|_F$}},
    xtick = {0,60},
    legend pos=north west,
    ymajorgrids=true,
    grid style=dashed,
    width=.47\linewidth,
    mark size = 1.4pt,
    restrict x to domain=0:60
]

  \addplot [color = RoyalPurple, style = thick] table [col sep=comma, x index = 0, y index = 1] {E_linear.txt};
  \addplot [color = Emerald, mark = triangle*, mark size= 1.8pt] table [col sep=comma, x index = 0, y index = 2] {E_linear.txt};
  \addplot [color = LimeGreen, mark = diamond*, mark size= 1.8pt] table [col sep=comma, x index = 0, y index = 3] {E_linear.txt};
  \addplot [color = NavyBlue, mark = square*] table [col sep=comma, x index = 0, y index = 4] {E_linear.txt};
\node[anchor=west, rotate=343] at (axis cs:25,9.3e-1) {\tiny{MLN ($\ell=r/2$)}};
\node[anchor=west, rotate=345] at (axis cs:25,3.6e-1) {\tiny{RHOSVD}};
\node[anchor=west, rotate=343] at (axis cs:22,2.15e-1) {\tiny{RSTHOSVD}};
\node[anchor=west, rotate=343] at (axis cs:19.8,1.25e-1) {\tiny{HOSVD}};
\node[anchor=west, rotate=0] at (axis cs:-1,7e-2) {\small{$\sigma_i = 1/i$}};
    
\end{semilogyaxis}
\end{tikzpicture}~\begin{tikzpicture}
\begin{semilogyaxis}[
    x label style={at={(axis description cs:0.5,0)},anchor=north},
    xlabel={\small{rank}},
    ylabel={\small{$\|\mathcal{A}-\mathcal{A}_r\|_F/\|\mathcal{A}\|_F$}},
    xtick = {0,60},
    legend pos=north west,
    ymajorgrids=true,
    grid style=dashed,
    width=.47\linewidth,
    mark size = 1.4pt,
    restrict x to domain=0:60
]

     \addplot [color = RoyalPurple, style = thick] table [col sep=comma, x index = 0, y index = 1] {E_quadratical.txt};
   \addplot [color = Emerald, mark = triangle*, mark size= 1.8pt] table [col sep=comma, x index = 0, y index = 2] {E_quadratical.txt};
  \addplot [color = LimeGreen, mark = diamond*, mark size= 1.8pt] table [col sep=comma, x index = 0, y index = 3] {E_quadratical.txt};
  \addplot [color = NavyBlue, mark = square*] table [col sep=comma, x index = 0, y index = 4] {E_quadratical.txt};
\node[anchor=west, rotate=0] at (axis cs:-1,1.5e-3) {\small{$\sigma_i = 1/i^2$}};
  
\end{semilogyaxis}
\end{tikzpicture} \\
\begin{tikzpicture}
\begin{semilogyaxis}[
    x label style={at={(axis description cs:0.5,0)},anchor=north},
    xlabel={\small{rank}},
    xtick = {0,60},
    ylabel={\small{$\|\mathcal{A}-\mathcal{A}_r\|_F/\|\mathcal{A}\|_F$}},
    legend pos=north west,
    ymajorgrids=true,
    grid style=dashed,
    width=.47\linewidth,
    mark size = 1.4pt,
    restrict x to domain=0:60
]
    \addplot [color = RoyalPurple, style = thick] table [col sep=comma, x index = 0, y index = 1] {E_cubic.txt};
    \addplot [color = Emerald, mark = triangle*, mark size= 1.8pt] table [col sep=comma, x index = 0, y index = 2] {E_cubic.txt};
  \addplot [color = LimeGreen, mark = diamond*, mark size= 1.8pt] table [col sep=comma, x index = 0, y index = 3] {E_cubic.txt};
  \addplot [color = NavyBlue, mark = square*] table [col sep=comma, x index = 0, y index = 4] {E_cubic.txt};
  \node[anchor=west, rotate=0] at (axis cs:-1,2e-5) {\small{$\sigma_i = 1/i^3$}};
\end{semilogyaxis}
\end{tikzpicture}~\begin{tikzpicture}
\begin{semilogyaxis}[
    x label style={at={(axis description cs:0.5,0)},anchor=north},
    xlabel={\small{rank}},
    ylabel={\small{$\|\mathcal{A}-\mathcal{A}_r\|_F/\|\mathcal{A}\|_F$}},
    xtick = {0,60},
    ymajorgrids=true,
    grid style=dashed,
    width=.47\linewidth,
    legend pos = north east,
    mark size = 1.4pt,
    restrict x to domain=0:60
]

     \addplot [color = RoyalPurple, style = thick] table [col sep=comma, x index = 0, y index = 1] {E_exponential.txt};
  \addplot [color = Emerald, mark = triangle*, mark size= 1.8pt] table [col sep=comma, x index = 0, y index = 2] {E_exponential.txt};
  \addplot [color = LimeGreen, mark = diamond*, mark size= 1.8pt] table [col sep=comma, x index = 0, y index = 3] {E_exponential.txt};
  \addplot [color = NavyBlue, mark = square*] table [col sep=comma, x index = 0, y index = 4] {E_exponential.txt};
  \node[anchor=west, rotate=0] at (axis cs:-1,1e-9) {\small{$\sigma_i = 0.5^i$}};
    
\end{semilogyaxis}
\end{tikzpicture}
 \caption{Comparison of MLN with oversampling parameter $\ell=r/2$, HOSVD, RHOSVD, and RSTHOSVD on tensors with different decays.}
 \label{fig: various decay comparison}
\end{figure}
The same occurs when we test the algorithms on 3-dimensional and 4-dimensional Hilbert tensors, see Figure \ref{fig:Hilbert tensors}.
\begin{figure}
\centering\begin{tikzpicture}
\begin{semilogyaxis}[
    title={\small{3D Hilbert tensor}},
    x label style={at={(axis description cs:0.5,0)},anchor=north},
    xlabel={\small{rank}},
    ylabel={\small{$\|\mathcal{H}-\mathcal{H}_r\|_F/\|\mathcal{H}\|_F$}},
    xtick = {0, 40},
    ytick={1, 1e-10},
    yticklabels={\small{$1$}, \small{$10^{-10}$}},
    legend pos=north west,
    ymajorgrids=true,
    grid style=dashed,
    width=.47\linewidth,
    legend pos = north east,
    mark size = 1.2pt,
    restrict x to domain=0:40
]

 \addplot [color = RoyalPurple, style = thick] table [col sep=comma, x index = 0, y index = 1] {E_3d_hilbert.txt};
 \addplot [color = Emerald, mark = triangle*, mark size= 1.8pt] table [col sep=comma, x index = 0, y index = 2] {E_3d_hilbert.txt};
  \addplot [color = LimeGreen, mark = diamond*, mark size= 1.8pt] table [col sep=comma, x index = 0, y index = 3] {E_3d_hilbert.txt};
  \addplot [color = NavyBlue, mark = square*] table [col sep=comma, x index = 0, y index = 4] {E_3d_hilbert.txt};
    \legend{\scriptsize{HOSVD}, \scriptsize{RHOSVD},\scriptsize{RSTHOSVD},\scriptsize{MLN}}
    
\end{semilogyaxis}
\end{tikzpicture}~\begin{tikzpicture}
\begin{semilogyaxis}[
    title={\small{4D Hilbert tensor}},
     xtick = {0, 60},
    ytick={1, 1e-10},
    yticklabels={\small{$1$}, \small{$10^{-10}$}},
    x label style={at={(axis description cs:0.5,0)},anchor=north},
    xlabel={\small{rank}},
    ylabel={\small{$\|\mathcal{H}-\mathcal{H}_r\|_F/\|\mathcal{H}\|_F$}},
    legend pos=north west,
    ymajorgrids=true,
    grid style=dashed,
    width=.47\linewidth,
    legend pos = north east,
    mark size = 1.2pt,
]

\addplot [color = RoyalPurple, style = thick] table [col sep=comma, x index = 0, y index = 1] {E_4d_hilbert.txt};
\addplot [color = Emerald, mark = triangle*, mark size= 1.8pt] table [col sep=comma, x index = 0, y index = 2] {E_4d_hilbert.txt};
  \addplot [color = LimeGreen, mark = diamond*, mark size= 1.8pt] table [col sep=comma, x index = 0, y index = 3] {E_4d_hilbert.txt};
  \addplot [color = NavyBlue, mark = square*] table [col sep=comma, x index = 0, y index = 4] {E_4d_hilbert.txt};
     \legend{\scriptsize{HOSVD}, \scriptsize{RHOSVD},\scriptsize{RSTHOSVD},\scriptsize{MLN}}
\end{semilogyaxis}
\end{tikzpicture}
\caption{Frobenius error of approximation of the 3D Hilbert tensor: $\mathcal{H}(i,j,k) = \frac{1}{i+j+k-2}$ (left) and 4D Hilbert tensor: $\mathcal{H}(i,j,k,\ell) = \frac{1}{i+j+k+ \ell-3}$ (right).}
    \label{fig:Hilbert tensors}
\end{figure}
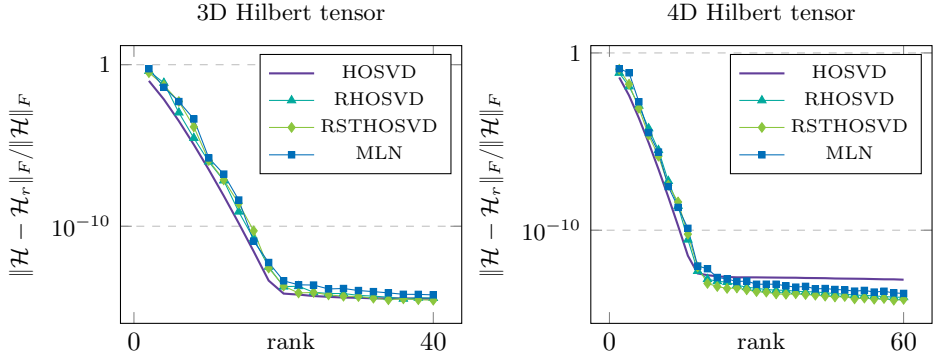
We remark that the Tucker 
approximations are only practical for such values of $d$, 
since for larger values
the storage cost for the core tensor can easily become 
the bottleneck.\\
To evaluate the execution time of the MLN method, in the next experiments, we compare it against the RHOSVD and RSTHOSVD.
In Figure \ref{fig:time_comparison_variable_ranks} we show the performances of these 3 methods on 3D tensors of size $100\times 100 \times 100$ and 4D tensors of size $70\times 70 \times 70 \times 70$ varying the multilinear rank of the approximants, while in Figure \ref{fig:time_comparison_variable_sizes} we fix the multilinear rank of the approximants and we increase the size of the problem.
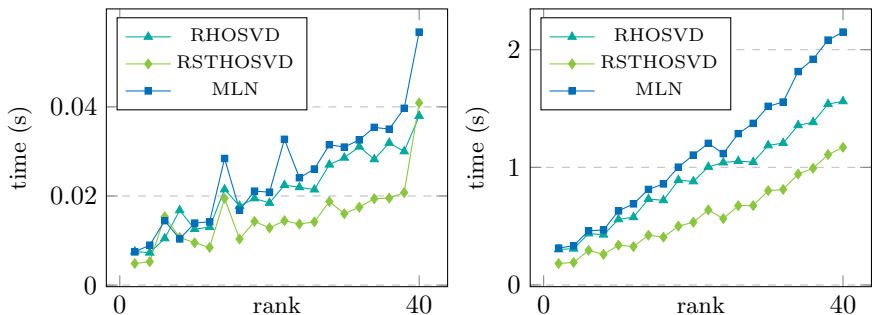
\begin{figure}
\centering\begin{tikzpicture}
\begin{axis}[
    x label style={at={(axis description cs:0.5,0)},anchor=north},
    xlabel={\small{rank}},
    ylabel={\small{time (s)}},
    legend pos=north west,
    xtick = {0,40},
    ytick = {0, 0.02, 0.04},
    yticklabel style={
        /pgf/number format/fixed,
        /pgf/number format/precision=2
      },
    scaled ticks=false,
    ymajorgrids=true,
    grid style=dashed,
    width=.47\linewidth,
    legend pos = north west,
    mark size = 1.2pt,
    restrict x to domain=0:40
]
    \addplot [color = Emerald, mark = triangle*, mark size= 1.8pt] table [col sep=comma, x index = 0, y index = 1] {time_comparison_variable_rank_3D.txt};
  \addplot [color = LimeGreen, mark = diamond*, mark size= 1.8pt] table [col sep=comma, x index = 0, y index = 2] {time_comparison_variable_rank_3D.txt};
  \addplot [color = NavyBlue, mark = square*] table [col sep=comma, x index = 0, y index = 3] {time_comparison_variable_rank_3D.txt};
\node[anchor=west, rotate=0] at (axis cs:-1,5e-13) {};
  \legend{\scriptsize{RHOSVD}, \scriptsize{RSTHOSVD}, \scriptsize{MLN} }

\end{axis}
\end{tikzpicture}~\begin{tikzpicture}
\begin{axis}[
    x label style={at={(axis description cs:0.5,0)},anchor=north},
    xlabel={\small{rank}},
    ylabel={\small{time (s)}},
    legend pos=north west,
    xtick = {0,40},
    ymajorgrids=true,
    grid style=dashed,
    width=.47\linewidth,
    legend pos = north west,
    mark size = 1.2pt,
    restrict x to domain=0:40
]
    \addplot [color = Emerald, mark = triangle*, mark size= 1.8pt] table [col sep=comma, x index = 0, y index = 1] {time_comparison_variable_rank_4D.txt};
  \addplot [color = LimeGreen, mark = diamond*, mark size= 1.8pt] table [col sep=comma, x index = 0, y index = 2] {time_comparison_variable_rank_4D.txt};
  \addplot [color = NavyBlue, mark = square*] table [col sep=comma, x index = 0, y index = 3] {time_comparison_variable_rank_4D.txt};
\node[anchor=west, rotate=0] at (axis cs:-1,5e-13) {};
  \legend{\scriptsize{RHOSVD}, \scriptsize{RSTHOSVD}, \scriptsize{MLN} }
\end{axis}
\end{tikzpicture}
\caption{Comparison of Tucker approximation methods in terms of computing time on 3D tensors (left) and 4D tensors (right) of fixed size.}
\label{fig:time_comparison_variable_ranks}
\end{figure}

\begin{figure}
\centering\begin{tikzpicture}
\begin{axis}[
    x label style={at={(axis description cs:0.5,0)},anchor=north},
    xlabel={\small{$n$}},
    ylabel={\small{time (s)}},
    legend pos=north west,
    xtick = {70,250},
    yticklabel style={
        /pgf/number format/fixed,
        /pgf/number format/precision=2
      },
    scaled ticks=false,
    ymajorgrids=true,
    grid style=dashed,
    width=.47\linewidth,
    legend pos = north west,
    mark size = 1.2pt,
    restrict x to domain=65:250
]
    \addplot [color = Emerald, mark = triangle*, mark size= 1.8pt] table [col sep=comma, x index = 0, y index = 1] {time_comparison_variable_size_3D.txt};
  \addplot [color = LimeGreen, mark = diamond*, mark size= 1.8pt] table [col sep=comma, x index = 0, y index = 2] {time_comparison_variable_size_3D.txt};
  \addplot [color = NavyBlue, mark = square*] table [col sep=comma, x index = 0, y index = 3] {time_comparison_variable_size_3D.txt};
\node[anchor=west, rotate=0] at (axis cs:-1,5e-13) {};
  \legend{\scriptsize{RHOSVD}, \scriptsize{RSTHOSVD}, \scriptsize{MLN} }
\end{axis}
\end{tikzpicture}~\begin{tikzpicture}
\begin{axis}[
    x label style={at={(axis description cs:0.5,0)},anchor=north},
    xlabel={\small{$n$}},
    ylabel={\small{time (s)}},
    legend pos=north west,
    xtick = {50,100},
    yticklabel style={
        /pgf/number format/fixed,
        /pgf/number format/precision=2
      },
    scaled ticks=false,
    ymajorgrids=true,
    grid style=dashed,
    width=.47\linewidth,
    legend pos = north west,
    mark size = 1.2pt,
    restrict x to domain=50:105
]
    \addplot [color = Emerald, mark = triangle*, mark size= 1.8pt] table [col sep=comma, x index = 0, y index = 1] {time_comparison_variable_size_4D.txt};
  \addplot [color = LimeGreen, mark = diamond*, mark size= 1.8pt] table [col sep=comma, x index = 0, y index = 2] {time_comparison_variable_size_4D.txt};
  \addplot [color = NavyBlue, mark = square*] table [col sep=comma, x index = 0, y index = 3] {time_comparison_variable_size_4D.txt};
\node[anchor=west, rotate=0] at (axis cs:-1,5e-13) {};
  \legend{\scriptsize{RHOSVD}, \scriptsize{RSTHOSVD}, \scriptsize{MLN} }
\end{axis}
\end{tikzpicture}
\caption{Comparison of Tucker approximation methods in terms of computing time on 3D tensors (left) and 4D tensors (right) with a fixed multilinear rank of approximation.}
\label{fig:time_comparison_variable_sizes}
\end{figure}
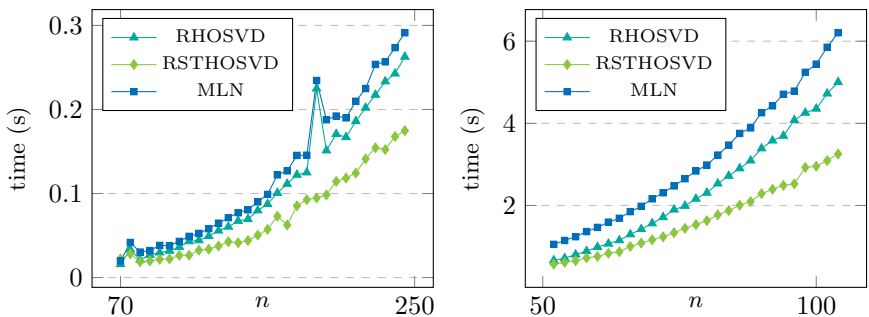

As we can see the computing time of MLN is slightly higher than that of RHOSVD and RSTHOSVD. This is because the bulk of the algorithm lies in the sketching procedure for not structured tensors and MLN requires oversampling. 

 In the next experiment, we investigate 
the effect of $d$ on the accuracy of the approximation.
In terms of accuracy, our analysis suggests a linear correlation between algorithmic error and the singular values of the tensor's matricizations and an exponential relationship with the norm of the projections $\mathcal{P}_k$, as shown for instance in Theorem~\ref{theorem: stability}. In particular,
since we do not expect the norm of the projectors $\widetilde{\mathcal P}_k$ to be influenced by $d$, 
the bound would predict an exponential growth of the constant with respect to $d$, because of the term 
$(1 + \tilde \tau)^d$. 

Figure \ref{fig:dimension comparison} shows 
a slight degradation of the quality of 
the approximation as $d$ increases, but this does 
not severely impact the performances (in particular 
in the case of exponential decay of the 
$\sigma_i$). 
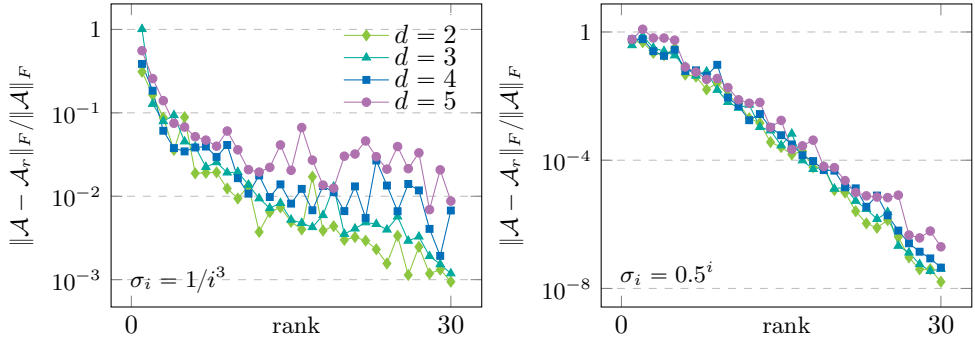
\begin{figure}
\centering\begin{tikzpicture}
\begin{semilogyaxis}[
    title={},
    x label style={at={(axis description cs:0.5,0)},anchor=north},
    xlabel={\small{rank}},
    ylabel={\small{$\|\mathcal{A}-\mathcal{A}_r\|_F/\|\mathcal{A}\|_F$}},
    xtick = {0, 30},
    ytick={1, 1e-1, 1e-2, 1e-3},
    yticklabels={\small{$1$},\small{$10^{-1}$},\small{$10^{-2}$}, \small{$10^{-3}$}},
    legend style={fill=none,draw=white, row sep=-4pt},
    ymajorgrids=true,
    grid style=dashed,
    width=.5\linewidth,
    legend pos = north east,
    mark size = 1.2pt,
    restrict x to domain=0:40
]

\addplot [color = LimeGreen, mark = diamond*, mark size= 2pt] table [col sep=comma, x index = 0, y index = 1] {E_dimension_dependence_cubic.txt};
    \addplot  [color = Emerald, mark = triangle*, mark size= 1.8pt] table [col sep=comma, x index = 0, y index = 2] {E_dimension_dependence_cubic.txt};
    \addplot [color = NavyBlue, mark = square*,mark size= 1.3pt] table [col sep=comma, x index = 0, y index = 3] {E_dimension_dependence_cubic.txt};
    \addplot [color = Orchid, mark = *, mark size= 1.6pt] table [col sep=comma, x index = 0, y index = 4] {E_dimension_dependence_cubic.txt};
    \node[anchor=west, rotate=0] at (axis cs:-1,1e-3) {\small{$ \sigma_i=1/i^3$}};
    \legend{$d=2$,$d=3$,$d=4$,$d=5$}
    
\end{semilogyaxis}
\end{tikzpicture}~\begin{tikzpicture}
\begin{semilogyaxis}[
    title={},
     xtick = {0, 30},
    ytick={1,1e-4, 1e-8},
    yticklabels={\small{$1$},\small{$10^{-4}$}, \small{$10^{-8}$}},
    x label style={at={(axis description cs:0.5,0)},anchor=north},
    xlabel={\small{rank}},
    ylabel={\small{$\|\mathcal{A}-\mathcal{A}_r\|_F/\|\mathcal{A}\|_F$}},
    legend pos=north west,
    ymajorgrids=true,
    grid style=dashed,
    width=.5\linewidth,
    legend pos = north east,
    mark size = 1.2pt,
]

\addplot [color = LimeGreen, mark = diamond*, mark size= 2pt] table [col sep=comma, x index = 0, y index = 1] {E_dimension_exponential.txt};
    \addplot  [color = Emerald, mark = triangle*, mark size= 1.8pt] table [col sep=comma, x index = 0, y index = 2] {E_dimension_exponential.txt};
    \addplot [color = NavyBlue, mark = square*,mark size= 1.3pt] table [col sep=comma, x index = 0, y index = 3] {E_dimension_exponential.txt};
    \addplot [color = Orchid, mark = *, mark size= 1.6pt] table [col sep=comma, x index = 0, y index = 4] {E_dimension_exponential.txt};
    \node[anchor=west, rotate=0] at (axis cs:-1,3e-8) {\small{$ \sigma_i=0.5^i$}};

\end{semilogyaxis}
\end{tikzpicture}
\caption{Multilinear Nystr\"om tested for tensors of varying dimension, $d=2,3,4,5$ but same decay rate: $\sigma_i = 1/i^3$ (left) and $\sigma_i = 0.5^i$ (right).}
    \label{fig:dimension comparison}
\end{figure}

In fact, the growth is far from 
the exponential one predicted by Theorem~\ref{thm : deterministic_bound}. 
We now perform another experiment aimed 
at understanding if Theorem~\ref{thm : deterministic_bound} is descriptive of the worst case behavior. To accomplish this, we select non-Gaussian 
sketching in an unfavorable setting.  Specifically, we use SRHT matrices for the sketchings. The tensor 
$\mathcal A = \mathcal S \times_{i = 1}^d Q_i^T$
 is constructed
as follows: instead of employing different Haar distributed orthogonal matrices for the $Q_i$, each $Q_i$ is set equal to the same $2\times 2$ block diagonal matrix $\mathrm{diag}(I_{7}, U)$, where $I_{7}$ is the $7\times 7$ identity matrix and $U\in \mathbb{R}^{25\times 25}$ is a Haar distributed orthogonal matrix. These $Q_i$ are known to be a difficult example for SRHT matrices \cite{SRHT_difficult, SRHT_difficult_Yuji}.
Regarding $\mathcal{S}$, we use a 4-dimensional superdiagonal tensor with exponential decay in the $\sigma_i$ of rate 0.3. Note that by construction such tensor is symmetric with respect to each mode. Therefore, setting the SRHT matrices $X_i$ and $Y_i$ equal to the same $X$ and $Y$ we expect that each projection contributes almost equally to the total error.

We report the numerical results in Table \ref{tab: exponential growth}, and consistently use the following notation, for $k =1, \dots, 4$: 
\[
E_k = \frac{|| \mathcal{T}-\mathcal{T}\times_{i=1}^k \mathcal{P}_i||_F}{|| \mathcal{T}-\mathcal{T}_r||_F}.
\]
In view of Theorem~\ref{thm : deterministic_bound}, 
we would expect that 
$E_{j+1} \approx (\tau + 1) E_j$ (assuming a worst-case behavior is encountered for all modes). The experiment demonstrates that the error growth is indeed exponential in $d$; in addition, the results 
in Table~\ref{tab: exponential growth} 
show that $1 + \tau$ describes the order of magnitude of 
$E_{j+1} / E_j$, as expected. To further highlight this, 
we report in Table~\ref{tab: exponential growth2}
the factors $1 + \tau$ and the error amplifications 
$E_{j+1} / E_j$. We remark that this example is not 
particularly meaningful from the low-rank approximation perspective: the errors 
are large and the low-rank 
approximations obtained of little practical use. We only 
include it to discuss whether Theorem~\ref{thm : deterministic_bound} gives an accurate description of the worst-case scenario.

Recall that this is not a limitation in practice: the 
method and the analysis are only of interest for moderate 
$d$, whereas for situations involving high dimensions, we 
suggest to look for alternatives that completely avoid 
the curse of dimensionality. Among these alternatives, the Streaming Tensor Train Approximation (STTA) \cite{kressner2022streaming} emerges as the closest in methodology to ours. It remains grounded in the GN framework, maintains streamability and one-pass capability, yet delivers an approximant in Tensor Train format.
\begin{table}[]
\caption{Frobenius relative error of approximation obtained by increasing the number of oblique projections used for the approximation. Each row 
of the table represents a different experiment; in this example, the sketchings are chosen in a particularly unfavorable manner in order to trigger the worst-case growth of the errors described in Theorem~\ref{thm : deterministic_bound}. }
    \centering
    \resizebox{\textwidth}{!}{
    \begin{tabular}{c|c|c|c|c|c|c}
    $\rho$ & $\tau$ & $|| \mathcal{T}-\mathcal{T}_r||_F$ & $E_1$ & $E_2$ & $E_3$ & $E_4$\\
    \hline
    $1.20\times 10^{-8}$ & $1.04\times 10^2$ & $4.51\times 10^{-9}$ & $9.67\times 10^0$ & $4.16\times 10^1$ & $6.38 \times 10^2$ & $2.02\times 10^4$\\
    \hline 
    $1.45\times 10^{-3}$ & $1.49\times 10^3$ & $4.51\times 10^{-9}$ & $3.63\times 10^7$ & $6.93\times 10^9$ & $2.45 \times 10^{12}$ & $9.27\times 10^{14}$\\
     \hline 
    $1.01\times 10^{-8}$ & $1.12\times 10^7$ & $4.51\times 10^{-9}$ & $4.16\times 10^6$ & $2.27\times 10^{12}$ & $6.12 \times 10^{18}$ & $1.94\times 10^{25}$\\
    \hline 
    $1.46\times 10^{-3}$ & $1.04\times 10^2$ & $4.51\times 10^{-9}$ & $1.22\times 10^6$ & $4.38\times 10^{6}$ & $3.66 \times 10^{7}$ & $3.33\times 10^{8}$\\
     \hline 
    $2.40\times 10^{-8}$ & $2.23\times 10^3$ & $4.51\times 10^{-9}$ & $3.92\times 10^3$ & $5.71\times 10^{5}$ & $2.98 \times 10^{8}$ & $2.22\times 10^{11}$
    \end{tabular}
    }
    \label{tab: exponential growth}
\end{table}

\begin{table}[]
\caption{The table represents the ratio of the 
errors obtained projecting on the first $j+1$ modes and 
the first $j$; Each row 
of the table represents a different experiment; the sketchings are chosen in a particularly unfavorable manner to trigger the worst-case growth of the errors described in Theorem~\ref{thm : deterministic_bound}. }
    \centering \small 
    \begin{tabular}{c|ccc}
$1 + \tau$ &  $E_2 / E_1$ & $E_3 / E_2$ & $E_4 / E_3$ \\ \hline
$1.05 \times 10^2$ & $4.33\times 10^0$ & $1.54\times 10^1$ &  $3.17\times 10^1$ \\ 
$1.49 \times 10^3$ & $1.90 \times 10^2$ & $3.54 \times 10^2$ & $3.78 \times 10^2$ \\ 
$1.12 \times 10^7$ & $5.46 \times 10^5$ &$2.72 \times 10^7$ & $3.17 \times 10^6$ \\ 
$1.05 \times 10^2$ & $3.61\times 10^0$ & $8.35\times 10^0$ &$ 9.10\times 10^0$ \\ 
$2.23 \times 10^3$ & $1.96 \times 10^2$ &
$5.22 \times 10^2$ & $7.45 \times 10^2$ \\ 
    \end{tabular}
    \label{tab: exponential growth2}
\end{table}

\section{Conclusions}

This paper presents and analyzes multilinear Nystr\"om (MLN), an innovative algorithm for the low-rank compression of a tensor in Tucker format. The method is based on the matrix 
Nystr\"om method, and in particular on the generalized Nystr\"om presented in \cite{nakatsukasa2020fast}.

Two key distinctions set our method apart from existing techniques. Firstly, MLN is a single-pass and streamable algorithm as it requires only two-sided sketchings of the original tensor.
Secondly, the MLN algorithm eliminates the need for costly orthogonalizations as it is based on the generalized Nystr\"om method for matrices; a crucial advantage over traditional approaches based on the randomized SVD. 

In terms of accuracy, the method exhibits only a marginal deviation from the RHOSVD, while still maintaining its near-optimal approximation quality. This assertion is reinforced not only by our rigorous theoretical analysis but also by the results obtained through extensive experiments.

Another crucial aspect of the method is its stability. 
Even though similar ideas have been proposed in the past (see for instance \cite{caiafa2014stable}), we propose 
suitable modifications that can ensure the stability of 
the methodology; this is attained by carefully handling 
the pseudoinverses involved in the process. 
This stability further strengthens the applicability and practicality of our proposed approach.
Several research lines remain open and will be 
investigated in the future. Our work characterizes
the quality of the approximants based on the norm 
of certain matrices (%
see Theorem~\ref{thm : deterministic_bound}).

In principle, finding a-priori probabilistic bounds for such matrices allows us to select the best sketching in any given scenario. While deriving an a-priori bound proves to be relatively easy in certain instances (such as when dealing with Gaussian matrices, as discussed in our paper), the complexity of the analysis increases when adopting structured sketching techniques. These techniques, while highly beneficial for structured tensors, present a more intricate analytical challenge. Notably, substantial efforts are already underway in this direction.
Another intriguing avenue of research involves expanding the methodology outlined in this study to encompass a broader spectrum of tensor networks. Such an extension could potentially unlock novel insights and applications across a wider range of contexts within the realm of tensor-based computations.

\section*{Acknowledgments}
Alberto Bucci and Leonardo Robol are members of the INdAM Research group GNCS (Gruppo Nazionale di Calcolo Scientifico)
and have been supported by the PRIN 2022 Project ``Low-rank Structures and Numerical Methods in Matrix and Tensor Computations and their Application'' and 
by the MIUR Excellence Department Project awarded to the Department of Mathematics, University of Pisa, CUP I57G22000700001. The work of Leonardo Robol has been supported by the National Research Center in High Performance Computing, Big Data and Quantum Computing (CN1 -- Spoke 6).

\bibliographystyle{siamplain}
\bibliography{references}
\end{document}